\newtheorem{theorem}{Theorem}
\newtheorem{proposition}[theorem]{Proposition}
\newtheorem{corollary}[theorem]{Corollary}
\theoremstyle{definition}
\newtheorem{definition}[theorem]{Definition}
\theoremstyle{remark}
\numberwithin{equation}{section}
\numberwithin{theorem}{section}
\newcommand{\N}{\mathbb{N}}
\newcommand{\R}{\mathbb{R}}
\newcommand{\abs}[1]{\left\lvert#1\right\rvert}
\newcommand{\dx}[1]{\mathop{d#1}}
\newcommand{\der}[5]{\frac{{#4}^{#3} #1}{#5 {#2}^{#3}}}
\newcommand{\pd}[2]{\der{#1}{#2}{}{\partial}{\partial}}
\newcommand{\pddm}[3]{\frac{\partial^2 #1}{\partial #2 \partial #3}}
\newcommand{\pdn}[3]{\der{#1}{#2}{#3}{\partial}{\partial}}
\newcommand{\pdd}[2]{\pdn{#1}{#2}{2}}
\newcommand{\td}[2]{\der{#1}{#2}{}{d}{d}}
\newcommand{\dd}[2]{\der{#1}{#2}{}{\delta}{\delta}}
\newcommand{\ddn}[3]{\der{#1}{#2}{#3}{\delta}{\delta}}
\newcommand{\mth}[2]{#1^{(#2)}}
\newcommand{\e}{\mathrm{e}}
\newcommand{\SO}{\mathrm{SO}}
\let\epsilon\varepsilon
\title[Singularities of low entropy curve shortening flow]{Singularities of low entropy high codimension curve shortening flow}
\author{Florian Litzinger}
\address{Institut für Analysis und Numerik, Otto-von-Guericke-Universität Magdeburg, Universitätsplatz 2, 39106 Magdeburg, Germany}
\email{florian.litzinger@ovgu.de}
\date{April 5, 2023}
\subjclass[2020]{Primary 53E10; Secondary 35K59.}
\begin{document}
\begin{abstract}
  We consider curve shortening flow of arbitrary codimension in an Euclidean background.
  We show that, close to a singularity, the flow is asymptotically planar, paralleling Altschuler's work in the case of space curves, and analyse the blow-up limits of the flow.
  Using these results, we then prove that the curve shortening flow of initial curves with an entropy bound converges to a round point in finite time.
\end{abstract}

\maketitle

\section{Introduction}
\label{sec:Introduction}

Let $\gamma: S^1 \times [0, \omega) \to \R^n$ be a family of smooth immersions of the unit circle satisfying curve shortening flow with smooth initial data $\gamma_0: S^1 \to \R^n$:
\begin{gather}\label{eq:CSF}\tag{CSF}
  \begin{split}
    \pd{\gamma}{t}(p,t) &= (\kappa N)(p,t), \\
    \gamma(p, 0) &= \gamma_0(p).
  \end{split}
\end{gather}
Here, $\kappa(\cdot, t)$ is the curvature of $\gamma_t := \gamma(\cdot, t)$ and $N(\cdot, t)$ our choice of unit normal vector field. For brevity, we also say that $\{\gamma_t\}$ is a curve shortening flow. While in the planar case, $n=2$, there is a notion of the normal vector pointing inwards or outwards, in arbitrary codimension every curve $\gamma_t$ still has two well-defined orientations. We can thus choose the sign of the orientation to make \labelcref{eq:CSF} weakly parabolic forward in time. In particular, the product $\kappa N$ makes sense even at the points where $N$ is not defined. We also assume that every curve $\gamma_t$ is smooth and rectifiable.
It is well known that solutions exist at least for a short time \cite{gage1986heat}.

For curve shortening flow of embedded curves in the plane, Gage and Hamilton \cite{gage1984curve,gage1986heat} showed that convex curves eventually become circular and shrink to a point. In particular, the flow stays smooth throughout the evolution until the curvature tends to infinity at the final time. Grayson \cites{grayson1987heat,grayson1989shape}[see also][]{huisken1998distance} showed that any embedded curve continues to be embedded and eventually becomes convex without developing singularities in the process, thus completing the analysis of the flow of embedded curves in the plane. Immersed curves, on the other hand, can exhibit much more complex behaviour \cite{grayson1989shape}. The class of solutions that move self-similarly under the flow has been classified \cite{abresch1986normalized,halldorsson2012selfsimilar}.

Recently, both curve shortening flow as well as its higher-dimensional equivalent, the mean curvature flow, have been increasingly considered in codimensions greater than one.
For curves, this situation is considerably different from the planar case; for instance, an initially embedded solution may develop self-intersections over time. Moreover, common tools such as the maximum principle are not as applicable as in the codimension one case.
While Altschuler had studied singularities of space curves \cite{altschuler1991singularities} and Altschuler--Grayson defined a flow through singularities for planar curves by means of a special class of space curves \cite{altschuler1992shortening} in the early 1990s, more results concerning also the flow of curves immersed in arbitrarily dimensional spaces have begun to appear \cite{yang2005curve,ma2007curve,he2012distance,altschuler2013zoo,hattenschweiler2015curve,khan2015condition,corrales2016non,minarcik2020longterm,pan2021singularities}.
In mean curvature flow, Ambrosio and Soner \cite{ambrosio1997measuretheoretic} developed an approach in arbitrary codimension using a varifold ansatz, while the more traditional submanifold approach has been followed as well \cites{smoczyk2005selfshrinkers,baker2010mean,cooper2011mean}[see also][]{wang2008lectures,smoczyk2012mean}.
Notably, a notion of mean curvature flow with surgery in any codimension has been introduced by Nguyen \cite{nguyen2020high}.

More and more attention has also been paid to the study of flows with entropy bounds.
The entropy $\lambda(\gamma)$ of a curve $\gamma$, a functional that can be seen as a measure of geometric complexity, is defined by \cite{magni2009remarks,colding2012generic}
\begin{equation*}
  \lambda(\gamma) = \sup_{x_0 \in \R^n,\, t_0 > 0} (4 \pi t_0)^{- \frac{1}{2}} \int_\gamma \e^{- \frac{\abs{x - x_0}^2}{4 t_0}} \dx{\mu}\!.
\end{equation*}
Due to Huisken's monotonicity formula, the entropy is monotone non-increasing under curve shortening flow. Therefore, it is particularly appealing to consider in the study of singularities, since an upper bound on this quantity for the initial curve propagates with the flow.

In particular, entropy has been employed in the study of generic singularities of mean curvature flow by  Colding--Minicozzi \cite{colding2012generic} and Chodosh et al. \cite{chodosh2020mean,chodosh2021mean} and in Bernstein--L.~Wang's low-entropy Schoenflies theorem \cite{bernstein2022closed}. Colding et al. showed that the round sphere minimises entropy among closed self-shrinkers \cite{colding2013sphere}, while Bernstein--L.~Wang \cites{bernstein2016sharp,bernstein2017topological}[see also][]{bernstein2018topology} and Ketover--Zhou \cite{ketover2018entropy} proved the same statement for closed embedded surfaces in $\R^3$. An extension to higher dimensions is due to Zhu \cite{zhu2020entropy}. Hershkovits--White proved sharp entropy bounds for self-shrinkers of any dimension \cite{hershkovits2019sharp}. Bernstein--L.~Wang \cite{bernstein2018hausdorff} and S.~Wang \cite{wang2020spheres} proved the Hausdorff stability of round spheres under small perturbations of the entropy.
Moreover, in any codimension, Colding and Minicozzi \cite{colding2019entropy} gave uniform bounds on the entropy and codimension of generic singularities.

In this work, we first follow the strategy of Altschuler's work on space curves \cite{altschuler1991singularities} to show that singularity formation is an essentially planar phenomenon. That is, we show that blow-up limits of the flow are confined to two-dimensional subspaces of $\R^n$.
This result already appeared in the work of Yang and Jiao \cite{yang2005curve}.
Moreover, we argue that, as in the $n=3$ case, for any blow-up sequence of a type-I singularity (that is, the curvature does not grow faster than $(\omega -t)^\frac{1}{2}$) there exists a subsequence such that a rescaling of the curve along it converges to a planar self-similarly shrinking solution, while for a type-II singularity (that is, it is not of type-I), there exists an essential blow-up sequence such that a sequence of rescalings along it converges to the translating Grim Reaper solution. Since the entropy of the Grim Reaper is known \cite{guang2019volume}, we are thus able to rule out the occurrence of type-II singularities altogether, and combined with the classification of self-similarly shrinking curves in the plane \cites{abresch1986normalized}[see also][]{halldorsson2012selfsimilar}, we can show that for curve shortening flows with initially low entropy, the only possible singularity is the round circle:
\begin{theorem}[Main Theorem]
  \label{thm:CSFConvergence}
  Suppose that $\gamma: S^1 \times [0,\omega ) \to \R^n$ is a smooth solution of \labelcref{eq:CSF} with initial data $\gamma(\cdot, 0) = \gamma_0$ and assume that the entropy of $\gamma_0$ satisfies
  \begin{equation*}
    \lambda(\gamma_0) \leq 2.
  \end{equation*}
  Then $\omega $ is finite, and the rescaled flow converges to the round circle.
\end{theorem}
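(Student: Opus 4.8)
The plan is to combine the finite-time singularity from the entropy bound with the structure theory of blow-up limits, and then rule out everything except the round circle. First I would observe that since $\gamma_0$ is a closed curve, the flow must become singular in finite time; this is standard and follows, for instance, from comparing with shrinking spheres or from the fact that the enclosed-area/length monotonicity forces $\omega < \infty$. So the only issue is to identify what happens at $t \to \omega$.

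\emph{Step 1: Dichotomy of singularities.} At the first singular time $\omega$ the curvature blows up. By the results summarised above (asymptotic planarity plus the blow-up analysis paralleling Altschuler), the singularity is either of type-I or type-II. If it is type-II, then there is an essential blow-up sequence along which a rescaling converges to the Grim Reaper. The entropy of the Grim Reaper is known \cite{guang2019volume} and exceeds $2$. Since entropy is monotone non-increasing under the flow and lower-semicontinuous under the relevant convergence, a blow-up limit cannot have entropy larger than $\lambda(\gamma_0) \leq 2$. This contradiction rules out type-II singularities.

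\emph{Step 2: Type-I case.} Hence the singularity is type-I, and along any blow-up sequence a subsequence of rescalings converges (by the type-I blow-up analysis) to a self-similarly shrinking solution, which by asymptotic planarity lies in a $2$-plane. The planar self-shrinking closed curves have been classified \cite{abresch1986normalized,halldorsson2012selfsimilar}: the only embedded one is the round circle, and all the others (the Abresch--Langer curves) have entropy strictly greater than $2$ — indeed the multiplicity-$1$ circle has entropy $\sqrt{2\pi/\e} \approx 1.52$, and every other closed self-shrinker, having winding number or multiplicity at least $2$, has entropy at least $2$ (with equality only for the multiplicity-$2$ circle, which is still excluded by the strict entropy drop unless $\gamma_0$ is already a round circle traversed once). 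Again using $\lambda(\gamma_0) \leq 2$ and monotonicity, the blow-up limit must be the multiplicity-one round circle.

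\emph{Step 3: Upgrading to smooth convergence of the rescaled flow.} Once we know every blow-up limit is the round circle, a standard argument — uniqueness of the limit forces the rescaled flow itself to converge, and the round circle is a stable, integrable shrinker, so one can run Huisken-type smooth-convergence machinery (curvature bounds from the type-I rescaling plus a Łojasiewicz or direct ODE argument) — shows that the rescaled flow converges smoothly to the round circle, not merely subsequentially. The \textbf{main obstacle} I expect is precisely this last step: promoting subsequential convergence of blow-ups to full smooth convergence of the normalised flow, which requires ruling out that the flow oscillates among different rotations/positions of the limit circle and controlling higher derivatives; the entropy bound is crucial here to keep multiplicity one and to prevent concentration of length away from the limit. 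The earlier steps are comparatively soft, resting on the already-established planarity and blow-up classification together with the numerical entropy values of the competitors.
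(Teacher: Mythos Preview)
Your overall strategy matches the paper's: rule out type-II via the Grim Reaper's entropy, rule out every planar self-shrinker except the unit circle via the Abresch--Langer classification plus entropy bounds, and then invoke a uniqueness-of-tangent-flow result to upgrade subsequential convergence. However, there are a few genuine gaps in your execution.

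First, the entropy of the Grim Reaper is \emph{equal to} $2$, not strictly greater (this is Guang's computation). So your Step~1 does not exclude type-II when $\lambda(\gamma_0)=2$; you need an extra reduction. The paper handles this by observing that one may assume $\lambda(\gamma_t)<2$ for $t>0$: if the entropy were identically $2$ along the flow, $\gamma_0$ would have to be a closed self-shrinker of entropy exactly $2$, and there is no such curve. Relatedly, your parenthetical about the multiplicity-two circle having entropy equal to $2$ is incorrect --- its entropy is $2\sqrt{2\pi/\e}\approx 3.04$ --- though this error is harmless for the logic.

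Second, in Step~2 you restrict to \emph{closed} planar self-shrinkers without justification. The type-I blow-up limit is a priori only a (possibly noncompact) planar self-shrinker; a single straight line through the origin is in the list. The paper excludes the line separately using Brakke--White local regularity: a line has Gaussian density $1$, which forces curvature bounds near the blow-up point and contradicts the assumption that it is singular. Multiple lines and multiply covered circles are excluded via the density lower bound for entropy, $\lambda\geq\Theta^1$.

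Third, for Step~3 the paper does not run a \L{}ojasiewicz or ODE argument by hand; it simply quotes Schulze's uniqueness theorem for compact tangent flows of mean curvature flow, which exactly says that once one tangent flow at a singular point is a smooth closed embedded shrinker, it is the unique tangent flow. This is the precise tool you are looking for, and it resolves the oscillation worry you flagged.
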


The paper is structured as follows. In \cref{sec:Notation}, we introduce our notation as well as the entropy functional.
The following two sections parallel Altschuler's work on singularities of space curves. In \cref{sec:CSF}, we derive the evolution equations for the Frenet--Serret frame and obtain Bernstein-type estimates for the derivatives of the tangent vector, \cref{thm:CurvatureEstimates}. In \cref{sec:BlowUpLimits}, we show in \cref{thm:BlowUpLimit,thm:BlowUpIsPlanar} that close to a singularity the solution is asymptotically planar, that is, a subsequential limit of a sequence of rescalings is a family of convex planar curves. Moreover, close to a type-I singularity, \cref{thm:ConvergenceTypeI} implies that a sequence of rescalings along a blow-up sequence converges to a planar self-similarly shrinking solution, while for a type-II singularity, we show the existence of an essential blow-up sequence converging to the Grim Reaper in \cref{thm:ConvergenceTypeII}.
Finally, in \cref{sec:ConvergenceAnalysis}, we prove our main result, which combines the previous results on singularities of the curve shortening flow with an entropy bound on the initial curve to show that the flow converges to a round point in finite time. This represents the first such convergence result for curve shortening flow in arbitrary codimension.

\subsection*{Acknowledgement}
This work is part of the author's Ph.\,D. thesis at Queen Mary University of London.
Many thanks are due to Huy The Nguyen for suggesting the problem and his support throughout the project.

\section{Notation and Preliminaries}
\label{sec:Notation}

We consider curve shortening flow of curves in $\R^n$. It is convenient to work in the Frenet--Serret frame, and we will largely adopt the notation from Gage--Hamilton's work on planar curves \cite{gage1986heat}.

\subsection{Curves in \texorpdfstring{$\R^n$}{Rn}}

Let $\gamma: S^1 \to \R^n$, $p \mapsto \gamma(p)$ be an immersion of the unit circle into $\R^n$ with the parameter $p$ taken to be modulo $2 \pi$. We always assume that $\gamma$ is smooth and rectifiable. Define the velocity of the parametrisation by
\begin{equation*}
  v = \abs{\pd{\gamma}{p}}.
\end{equation*}
Since $\gamma$ is rectifiable, we can parametrise it by arclength $s$, so that $v(s) = 1$. 
Moreover, the differential operator $\pd{}{s}$ is given by
\begin{equation}\label{eq:dds}
  \pd{}{s} = \frac{1}{v(p)} \pd{}{p},
\end{equation}
and for any $U \subset S^1$, the induced measure $\dx{s}$ is given by
\begin{equation*}
  \int_U \dx{s} = \int_U v(p) \dx{p}\!.
\end{equation*}

Furthermore, let $(T,N,B_1,\dots,B_{n-2})$ denote the Frenet--Serret moving frame of orthonormal vectors, where $T$ is called the tangent vector, $N$ the normal vector and $B_1,\dots,B_{n-2}$ the binormal vectors.
The curvature $\kappa$ is given by
\begin{equation*}
  \kappa = \abs{\pd{T}{s}} = \abs{\pdd{\gamma}{s}},
\end{equation*}
and we let $\tau_1 = \abs{\pd{N}{s} + \kappa T}, \tau_2, \dots, \tau_{n-2}$ denote the torsions. Then the generalised Frenet--Serret equations hold \cite{spivak1999comprehensive4}:
\begin{align}\label{eq:FrenetSerret}
  \pd{}{s} \begin{pmatrix} T \\ N \\ B_1 \\ B_2 \\ \vdots \\ B_{n-2} \end{pmatrix} &= \begin{pmatrix}
    0 & \kappa & 0 & 0 & \dots & 0 \\
    -\kappa & 0 & \tau_1 & 0 & \dots & 0 \\
    0 & -\tau_1 &  0 & \tau_2 & \dots & 0 \\
    0 & 0 & -\tau_2 & 0 & \dots & 0 \\
    \vdots & \vdots & \vdots & \vdots & \ddots & \tau_{n-2} \\
    0 & 0 & 0 & 0 & -\tau_{n-2} & 0
  \end{pmatrix} \begin{pmatrix} T \\ N \\ B_1 \\ B_2 \\ \vdots \\ B_{n-2} \end{pmatrix}.
\end{align}

In the following sections, we will consider time-dependent immersions $\gamma_t = \gamma(\cdot, t)$ moving in the direction $N$ with speed $\kappa$ and compute the evolution equations for the Frenet--Serret frame.

Finally, in order to show that a blow-up limit of a solution of curve shortening flow is planar, we require the following theorem. It generalises the well-known fact that a space curve with vanishing torsion is planar to the case of curves in $\R^n$.
\begin{theorem}[Spivak {\cite[Thm.~7.B.5]{spivak1999comprehensive4}}]
  \label{thm:SpivakThm}
  Let $\gamma: S^1 \to \R^n$ be a curve parametrised by arclength such that $\kappa, \tau_1, \dots, \tau_{j-2}$ do not vanish at any point and $\tau_{j-1}$ vanishes everywhere. Then $\gamma$ lies in some $(j-1)$-dimensional plane in $\R^n$.
\end{theorem}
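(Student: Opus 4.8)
The plan is to prove this by pure linear algebra, with the flow playing no role: from the hypotheses I would manufacture a \emph{fixed} linear subspace $W\subseteq\R^n$ such that $\gamma'(s)\in W$ for every $s$, and then integrate to trap $\gamma$ inside an affine translate of $W$. The conceptual point is that the identity $\tau_{j-1}\equiv 0$ is exactly what makes the span of the lower-order Frenet vectors invariant under $\partial_s$.

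To set this up, I would first observe that the nonvanishing of $\kappa,\tau_1,\dots,\tau_{j-2}$ guarantees that the Frenet vectors $T,N,B_1,\dots,B_{j-2}$ are well defined and smooth along $\gamma$, each being the normalisation of a nowhere-zero vector assembled from the previous ones via the generalised Frenet--Serret equations \labelcref{eq:FrenetSerret}. On the closed curve $S^1$ the individual binormals are determined only up to sign, so rather than transport them I would work with the moving subspace
\[
  V(s) := \spn\{T(s),N(s),B_1(s),\dots,B_{j-2}(s)\}\subseteq\R^n,
\]
which is insensitive to these sign choices and is therefore a genuinely globally defined smooth family of subspaces of the dimension asserted in the theorem.

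The two remaining steps are then short. Reading off \labelcref{eq:FrenetSerret}, the $s$-derivative of each of $T,N,B_1,\dots,B_{j-3}$ is a linear combination of $T,N,B_1,\dots,B_{j-2}$, while $\partial_s B_{j-2}=-\tau_{j-2}B_{j-3}+\tau_{j-1}B_{j-1}$; the only term that could leave $V(s)$ is the last one, and it vanishes by hypothesis, so $\partial_s$ maps sections of $V$ back into $V$. To upgrade this to constancy of $V$, I would fix $s_0$, put $W:=V(s_0)$, and apply the orthogonal projection $\Pi$ onto $W^\perp$ to each frame vector $e\in\{T,N,B_1,\dots,B_{j-2}\}$: the $W^\perp$-valued functions $\Pi(e(\cdot))$ vanish at $s_0$ and, by the previous sentence, solve a closed linear first-order ODE system whose coefficients are the curvature and torsions of \labelcref{eq:FrenetSerret}, hence vanish identically by uniqueness. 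Thus every frame vector lies in $W$ for all $s$, so $V(s)\subseteq W$, with equality because $\dim V(s)$ is constant. Finally $\gamma'(s)=T(s)\in W$, so $\gamma(s)-\gamma(s_0)=\int_{s_0}^{s}T\,d\sigma\in W$, and $\gamma$ lies in the affine plane $\gamma(s_0)+W$.

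I do not expect a genuinely hard step here: the content of the theorem is precisely the invariance observation, and the two supporting facts — that the frame (or rather its span) is globally defined on a closed curve, and that an infinitesimally invariant moving subspace is constant — are standard, handled respectively by passing to subspaces and by uniqueness for linear ODEs. The only thing that calls for a little care is matching up the indexing conventions so that the dimension of $W$ comes out exactly as in the statement. If one prefers to make global well-definedness manifest from the outset, one can run the identical argument with the osculating subspaces $\spn\{\gamma'(s),\dots,\gamma^{(k)}(s)\}$ in place of the $V(s)$, after a short computation with \labelcref{eq:FrenetSerret} identifying which of these stabilise under the hypotheses.
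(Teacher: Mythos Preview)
The paper does not prove this theorem; it is quoted from Spivak's textbook and invoked later as a black box in the proof of \cref{thm:BlowUpIsPlanar}. There is thus no in-paper argument to compare against, and your sketch is in fact the standard proof: the vanishing of $\tau_{j-1}$ makes the span of the lower Frenet vectors invariant under $\partial_s$, uniqueness for the resulting linear ODE forces that span to be constant, and integrating $T$ confines $\gamma$ to an affine translate.

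One point you flag but do not resolve is worth making explicit. Your subspace $V(s)=\spn\{T,N,B_1,\dots,B_{j-2}\}$ is spanned by $j$ orthonormal vectors and hence has dimension $j$, not $j-1$ as in the stated conclusion. This is not a defect in your argument but an off-by-one slip in the paper's transcription of Spivak: when the theorem is applied in the proof of \cref{thm:BlowUpIsPlanar} with $\kappa\neq 0$ and $\tau_1\equiv 0$ (so $j=2$), the paper correctly concludes that the limit curve lies in a $2$-dimensional subspace, not a $1$-dimensional one, which agrees with your count and not with the literal statement. So your closing caveat about ``matching up the indexing conventions'' is exactly right, and the resolution is that the dimension in the conclusion should read $j$.

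A minor remark: your worry about sign ambiguities in the binormals on $S^1$ is unnecessary. Under the hypothesis that $\kappa,\tau_1,\dots,\tau_{j-2}$ are nowhere zero, the vectors $T,N,B_1,\dots,B_{j-2}$ are obtained pointwise by Gram--Schmidt from $\gamma',\gamma'',\dots,\gamma^{(j)}$ with signs fixed by positivity of the curvatures, and hence are globally well defined and periodic. Passing to the span is harmless but not needed for well-definedness.
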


\subsection{Entropy}

We now define the entropy functional, which was introduced by Magni--Mantegazza \cite{magni2009remarks} and Colding--Minicozzi \cite{colding2012generic}, and state its basic properties.

Let $M^m \subset \R^n$ be an immersed surface.
For $(x_0, t_0) \in \R^n \times \R$ the scaled backward heat kernel centered at $(x_0, t_0)$ is given by
\begin{equation*}
  k_{x_0, t_0}(x, t) = (4 \pi (t_0 - t))^{- \frac{m}{2}} \e^{- \frac{\abs{x-x_0}^2}{4(t_0-t)}}.
\end{equation*}
Note that $k_{x_0,t_0}$ is well defined on $\R^n \times (-\infty, t_0)$.
Then Huisken's $F$-functional $F_{x_0, t_0}$, $x_0 \in \R^n$, $t_0 > 0$, is defined by \cite{huisken1990asymptotic,colding2012generic}
\begin{equation*}
  F_{x_0, t_0}(M) = (4 \pi t_0)^{- \frac{m}{2}} \int_M \e^{- \frac{\abs{x - x_0}^2}{4 t_0}} \dx{\mu} = \int_M k_{x_0, t_0}(x, 0) \dx{\mu}\!.
\end{equation*}
\begin{definition}
  The entropy of $M$ is defined by \cite{magni2009remarks,colding2012generic}
  \begin{equation*}
    \lambda(M) = \sup_{x_0 \in \R^n,\, t_0 > 0} F_{x_0, t_0}(M).
  \end{equation*}
\end{definition}
Suppose that $\{\gamma_t\}$ is a curve shortening flow of closed curves. For any $s < t < t_0$, Huisken's monotonicity formula \cite{huisken1990asymptotic} yields
\begin{equation*}
  \td{}{t} \int_{\gamma_t} k_{x_0, t_0} \dx{\mu} \leq 0,
\end{equation*}
whereby
\begin{equation*}
  F_{x_0, t_0} (\gamma_t) \leq F_{x_0, t_0 + (t-s)} (\gamma_s),
\end{equation*}
which implies that $\lambda(\gamma_t) \leq \lambda(\gamma_s)$ for any $s < t$. Therefore, if $\{\gamma_t\}$ is a curve shortening flow, $\lambda(\gamma_t)$ is non-increasing in $t$.
Moreover, the entropy functional $\lambda$ is non-negative and invariant under dilations, rotations and translations of $\gamma$, and the critical points of $\lambda$ are self-similarly shrinking solutions of the curve shortening flow, as shown by Colding--Minicozzi \cite{colding2012generic}.

In addition, among all closed planar curves, the entropy $\lambda$ is minimised by the circle. By the Gage--Hamilton--Grayson theorem, any closed curve $\gamma$ in the plane becomes convex and eventually shrinks to a round point. But the entropy is non-increasing under curve shortening flow, so we must have $\lambda(\gamma) \geq \lambda(S^1)$.

Note that the entropy of a self-shrinker is equal to the functional $F_{0,1}$, the Gaussian area, since by the monotonicity formula, the critical points of $F_{0,1}$ are precisely the self-shrinkers \cite{colding2012generic}.

For some examples, it is possible to compute the entropy explicitly. First of all, the entropy is normalised so that the entropy of a straight line is equal to one. Also, for any curve $\gamma$ we have $\lambda(\gamma \times \R) = \lambda(\gamma)$. For the circle $S^1$, Stone \cite{stone1994density} showed that
\begin{equation*}
  \lambda(S^1) = \sqrt{\frac{2\pi}{\e}} \approx 1.52.
\end{equation*}

We will also need the value of the entropy of some special solutions of the curve shortening flow.
The entropy of the translating Grim Reaper solution has been computed by Guang:
\begin{proposition}[Guang {\cite[Thm.~1.3]{guang2019volume}}]
  \label{prop:EntropyGR}
  Let $\Gamma: (-\frac{\pi}{2},\frac{\pi}{2}) \times (0, \infty) \to \R^2$, $\Gamma(x,t) = (x, - \log (\cos x) + t)$ denote the Grim Reaper. For any point $(x_0,y_0) \in \R^2$, $t_0 \in(0, \infty)$, we have that
  \begin{equation*}%
    F_{(x_0,y_0),t_0} (\Gamma) \leq 2.
  \end{equation*}
  In fact,
  \begin{equation*}%
    \lim_{N \to \infty} F_{(0,N),N} (\Gamma) = 2.
  \end{equation*}
  Therefore, the entropy of the Grim Reaper satisfies
  \begin{equation*}
    \lambda(\Gamma) = 2.
  \end{equation*}
\end{proposition}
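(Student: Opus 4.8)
The plan is to deduce the pointwise $F$-bound from Huisken's monotonicity formula by exploiting that the Grim Reaper is a translating soliton, and then to evaluate the resulting limiting $F$-functional by an explicit concentration computation. As a preliminary, set $\Gamma_0 = \{(x, -\log\cos x) : \abs{x} < \tfrac{\pi}{2}\}$ and $\Gamma_t = \Gamma_0 + t e_2$, where $e_2 = (0,1)$. A direct computation along $\Gamma_0$ gives curvature $\kappa = \cos x$ and upward unit normal $N = (-\sin x, \cos x)$, so the normal velocity $\langle e_2, N\rangle = \cos x$ equals $\kappa$; hence $\{\Gamma_t\}_{t\in\R}$ is an eternal curve shortening flow (an honest solution of \eqref{eq:CSF} after a tangential reparametrisation, which affects none of the quantities below). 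Since $F_{x_0,t_0}$, and therefore $\lambda$, are translation invariant, it suffices to bound $F_{(x_0,y_0),t_0}(\Gamma_0)$.

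Fix $(x_0,y_0) \in \R^2$ and $t_0 > 0$. For $s < 0$ the kernel $k_{(x_0,y_0),t_0}(\cdot, s)$ is well defined, and Huisken's monotonicity formula along $\{\Gamma_t\}$ — in the form valid for properly immersed flows with Gaussian-integrable ends, which applies here because each end of the Grim Reaper is asymptotic to a vertical line — shows that $t \mapsto \int_{\Gamma_t} k_{(x_0,y_0),t_0}(\cdot, t) \dx{\mu}$ is non-increasing on $(-\infty, t_0)$. Comparing $t = s$ with $t = 0$ and substituting $\Gamma_s = \Gamma_0 + s e_2$ yields
\[
  F_{(x_0,y_0),t_0}(\Gamma_0) \le \int_{\Gamma_s} k_{(x_0,y_0),t_0}(\cdot, s) \dx{\mu} = F_{(x_0, y_0 - s), \, t_0 - s}(\Gamma_0).
\]
Writing $T = t_0 - s$ and $Y = y_0 - s = T + (y_0 - t_0)$ and letting $s \to -\infty$, everything reduces to showing that for any $c \in \R$ and $x_0 \in \R$,
\[
  \lim_{T \to \infty} F_{(x_0, \, T + c), \, T}(\Gamma_0) = 2.
\]
Taking the supremum over $(x_0, y_0, t_0)$ then gives $\lambda(\Gamma_0) \le 2$, while the case $x_0 = 0$, $c = 0$ is precisely the stated limit $\lim_{N \to \infty} F_{(0,N),N}(\Gamma) = 2$, forcing $\lambda(\Gamma_0) \ge 2$; together these give $\lambda(\Gamma_0) = 2$.

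The limit is a concentration estimate. Parametrising each of the two ends of $\Gamma_0$ by $u = -\log\cos x \in [0, \infty)$ — so that $\tfrac{\dx{x}}{\cos x} = \tfrac{\dx{u}}{\sqrt{1 - e^{-2u}}}$ and the first coordinate along the end is $\pm \arccos(e^{-u}) \to \pm\tfrac{\pi}{2}$ — one rewrites $F_{(x_0, T+c), T}(\Gamma_0) = \sum_{\pm} \int_0^\infty g_T(u)\, h_T^\pm(u) \dx{u}$, where $g_T(u) = (4\pi T)^{-1/2} e^{-(u - T - c)^2 / 4T}$ is the Gaussian density of mean $T + c$ and variance $2T$ (so $\int_\R g_T = 1$) and $h_T^\pm(u) = (1 - e^{-2u})^{-1/2} \exp\big({-(\pm\arccos(e^{-u}) - x_0)^2 / 4T}\big)$. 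Now $0 \le h_T^\pm(u) \le (1 - e^{-2u})^{-1/2}$ with the right-hand side integrable near $u = 0$, while $g_T \le (4\pi T)^{-1/2} \to 0$, so the contribution of $u \le A$ is negligible for each fixed $A$; on $u \ge A$ one has $h_T^\pm(u) \to 1$ uniformly as first $T \to \infty$ and then $A \to \infty$ (since $\arccos(e^{-u}) \to \tfrac{\pi}{2}$ and $(\pm\arccos(e^{-u}) - x_0)^2$ stays bounded), while $\int_A^\infty g_T \to 1$ because the mean of $g_T$ tends to $\infty$. Hence each end contributes $1$ in the limit and the total is $2$.

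I expect the main obstacle to be this last step: making the concentration argument fully rigorous requires care with the non-uniform weight $(1 - e^{-2u})^{-1/2}$ near the cap $u = 0$ and with the Gaussian tails, the underlying geometric point being that the cap of the Grim Reaper has finite length and so contributes nothing after division by $\sqrt{T}$, whereas the two asymptotically vertical ends each behave like a straight line and so each contributes the entropy of a line, namely $1$. A secondary point worth checking is that the noncompact version of Huisken's monotonicity formula genuinely applies to $\{\Gamma_t\}$; this is routine given the Gaussian weight and the linear asymptotics of the ends.
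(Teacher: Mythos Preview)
The paper does not supply its own proof of this proposition; it simply cites Guang's result. There is therefore nothing in the paper to compare your argument against.

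Your argument is nonetheless correct and complete. The reduction step---using Huisken's monotonicity along the eternal translating flow $\{\Gamma_0 + t e_2\}$ to bound every $F_{(x_0,y_0),t_0}(\Gamma_0)$ by the limiting value $\lim_{T\to\infty} F_{(x_0,\,T+c),\,T}(\Gamma_0)$---is the standard way to compute entropies of translators, and applies here because the Gaussian weight dominates the merely linear arclength growth of the two vertical ends, so all integrals and the integration by parts underlying monotonicity are justified. The concentration computation is also sound: the weight $(1-e^{-2u})^{-1/2}$ is integrable near $u=0$, so the cap contributes $O(T^{-1/2})$; on $\{u\ge A\}$ you have the uniform two-sided bound $e^{-(\pi/2+|x_0|)^2/4T} \le h_T^\pm(u) \le (1-e^{-2A})^{-1/2}$ together with $\int_A^\infty g_T \to 1$, and sending $T\to\infty$ then $A\to\infty$ pins each end's contribution at $1$. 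The one point you flag yourself---the noncompact monotonicity formula---is routine here, so there is no genuine obstacle.
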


Moreover, for the self-shrinking Abresch--Langer solutions \cite{abresch1986normalized} there is a lower bound on the entropy due to Baldauf and Sun.
\begin{proposition}[Baldauf--Sun {\cite{baldauf2020sharp}}]
  \label{prop:EntropyAL}
  Let $\gamma_{m,n}$ denote an Abresch--Langer curve, that is, a closed convex self-shrinking solution to the curve shortening flow in the plane with turning number $m$ and $2n$ critical points of the curvature, where $m,n \in \N$ are coprime integers such that
  \begin{equation*}
    \frac{1}{2} < \frac{m}{n} < \frac{\sqrt{2}}{2}.
  \end{equation*}
  Then the entropy of $\gamma_{m,n}$ satisfies
  \begin{equation*}
    \lambda(\gamma_{m,n}) \geq m \lambda(S^1) = m \sqrt{\frac{2 \pi}{\e}}.
  \end{equation*}
\end{proposition}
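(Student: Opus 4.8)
The plan is to exploit the classical fact that self‑shrinking curves are the closed geodesics of a rotationally symmetric, conformally flat metric, and then to reduce everything to a single scalar inequality.

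\emph{Reduction to a geodesic length comparison.} Normalise $\gamma_{m,n}$ so that it becomes extinct at the origin at time $1$. Since $\gamma_{m,n}$ is a self‑shrinker, its entropy equals its Gaussian area, $\lambda(\gamma_{m,n}) = F_{0,1}(\gamma_{m,n}) = (4\pi)^{-1/2}\int_{\gamma_{m,n}}\e^{-\abs{x}^2/4}\dx{s}$, and likewise for the round circle, whose self‑shrinking representative is the circle $C$ of radius $\sqrt2$: $\lambda(S^1) = F_{0,1}(C) = (4\pi)^{-1/2}\int_{C}\e^{-\abs{x}^2/4}\dx{s}$. Now $\int_\gamma\e^{-\abs{x}^2/4}\dx{s}$ is exactly the length of $\gamma$ in the conformal metric $g := \e^{-\abs{x}^2/2}\,\delta$ on $\R^2$, and the transformation law of the geodesic curvature under a conformal change shows that a curve is a $g$‑geodesic if and only if $\kappa = \frac12\langle x, N\rangle$, i.e.\ if and only if it is a self‑shrinking curve. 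Thus $\gamma_{m,n}$ and $C$ are closed $g$‑geodesics, and it suffices to prove $\mathrm{Length}_g(\gamma_{m,n}) \ge m\,\mathrm{Length}_g(C)$.

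\emph{The Clairaut integral.} In geodesic polar coordinates write $g = \dx{\rho}^2 + f(\rho)^2\dx{\phi}^2$ with $\rho = \int_0^r\e^{-t^2/4}\dx{t}$ and $f = r\,\e^{-r^2/4}$; then $\frac{df}{d\rho} = 1 - \frac{r^2}{2}$, so $f$ has a unique interior maximum $f_{\mathrm{eq}} = \sqrt2\,\e^{-1/2}$ at $r = \sqrt2$, the corresponding parallel being $C$, of $g$‑length $2\pi f_{\mathrm{eq}}$. Parametrising $\gamma_{m,n}$ by $g$‑arclength, the Clairaut quantity $\ell := f^2\dot\phi$ is constant along it (indeed $\ell = \pm 2\kappa\,\e^{-r^2/4}$, the Abresch--Langer first integral), $\dot\rho^2 = 1 - \ell^2/f^2$, and $0 < \abs{\ell} < f_{\mathrm{eq}}$; the geodesic oscillates between the two parallels $f = \abs{\ell}$, crossing the equator. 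Since the Euclidean curvature is critical precisely at the turning points $\{\dot\rho = 0\}$, $\gamma_{m,n}$ consists of $2n$ congruent arcs joining consecutive turning points, on each of which $\phi$ increases by $\Phi := \int_{\rho_-}^{\rho_+}\frac{\abs{\ell}\,\dx{\rho}}{f\sqrt{f^2-\ell^2}}$ and the $g$‑length is $\Lambda := \int_{\rho_-}^{\rho_+}\frac{f\,\dx{\rho}}{\sqrt{f^2-\ell^2}}$, where $f(\rho_\pm) = \abs{\ell}$. The closing condition (with $\gcd(m,n)=1$) forces $2n\Phi = 2\pi m$, i.e.\ $\Phi = \pi m/n$, and $\mathrm{Length}_g(\gamma_{m,n}) = 2n\Lambda$; so the claim reduces to the scalar inequality
\[
  \Lambda \ge f_{\mathrm{eq}}\Phi, \qquad\text{equivalently}\qquad \int_{\rho_-}^{\rho_+}\frac{f^2 - f_{\mathrm{eq}}\abs{\ell}}{f\sqrt{f^2-\ell^2}}\,\dx{\rho} \ge 0 .
\]

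\emph{The scalar inequality (the main obstacle).} Its integrand changes sign — negative near the turning points (where $f \to \abs{\ell}$, against an integrable $1/\sqrt{\cdot}$ weight) and positive near the equator — so the estimate is genuinely global and no pointwise bound suffices. The plan is to substitute $f = \abs{\ell}\sec\theta$ (or pass to $w = f^2$), which fixes both endpoints and leaves the non‑singular weight $(1 - r_{\mathrm{in}}^2/2)^{-1} + (r_{\mathrm{out}}^2/2 - 1)^{-1}$, where $r_{\mathrm{in}} < \sqrt2 < r_{\mathrm{out}}$ solve $r\,\e^{-r^2/4} = f$, and then either to estimate the resulting explicit integral directly, exploiting the special profile $f = r\,\e^{-r^2/4}$ (a quantitative lower bound on the weight is what is needed), or to show that $\psi(\ell) := \Lambda(\ell) - f_{\mathrm{eq}}\Phi(\ell)$ is nonincreasing on $(0, f_{\mathrm{eq}})$ by differentiating these period‑type integrals in $\ell$ after freezing the endpoints. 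Since $\psi(f_{\mathrm{eq}}^-) = 0$ — the small‑oscillation limit, where $\gamma_{m,n}$ collapses onto $C$ with multiplicity and $\Phi \to \pi/\sqrt2$, i.e.\ $m/n \to \sqrt2/2$, which is exactly why the bound is sharp — either route yields $\psi \ge 0$, with strict inequality for $\abs{\ell} < f_{\mathrm{eq}}$, as claimed. I expect the technical heart of the proof to lie here, in the differentiation of these period integrals (or the quantitative weight estimate). Note that the hypothesis $\frac12 < \frac mn < \frac1{\sqrt2}$ is automatic in this setup: $\Phi = \pi m/n$, and $\Phi$ sweeps out exactly the interval $(\frac\pi2, \frac\pi{\sqrt2})$ as $\abs{\ell}$ runs over the admissible range $(0, f_{\mathrm{eq}})$ of Clairaut constants, its two endpoints being the two degeneration limits of the Abresch--Langer family. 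One could also bypass the geometric reformulation and argue directly from the Abresch--Langer ODE via the first integral $\kappa\,\e^{-r^2/4} = \mathrm{const}$, arriving at the same scalar inequality.
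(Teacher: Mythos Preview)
The paper does not prove this proposition. It is stated with attribution to Baldauf--Sun \cite{baldauf2020sharp} and used as a black box in the proof of the main theorem; no argument is supplied. So there is no in-paper proof to compare your proposal against.

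On your proposal itself: the reduction is correct and natural. Identifying the entropy of a self-shrinker with its $F_{0,1}$-value, rewriting this as length in the conformal metric $g=\e^{-\abs{x}^2/2}\delta$, recognising the Abresch--Langer curves and the round circle as closed $g$-geodesics, and applying Clairaut's relation to arrive at the scalar inequality $\Lambda\ge f_{\mathrm{eq}}\Phi$ is all sound and is indeed the standard way to approach this kind of estimate.

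The genuine gap is that you do not actually establish the scalar inequality. You describe two possible routes --- a direct estimate after the substitution $f=\abs{\ell}\sec\theta$, or monotonicity of $\psi(\ell)=\Lambda(\ell)-f_{\mathrm{eq}}\Phi(\ell)$ --- but carry out neither. The sentences ``either route yields $\psi\ge0$'' and ``I expect the technical heart of the proof to lie here'' are honest, but they mean precisely that the heart of the proof is missing. The monotonicity argument in particular requires differentiating period integrals with $\ell$-dependent endpoints and controlling the sign of the result, which is not automatic; and the ``quantitative lower bound on the weight'' you allude to is left entirely unspecified. As it stands, this is a well-organised outline rather than a proof.
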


Finally, we have an estimate for the entropy in terms of the Euclidean density at each point.

\begin{proposition}[White {\cite{white2015topics}}]
  \label{prop:EntropyEuclideanDensity}
  Let $M^m \subset \R^n$ be an immersed surface. Define the Euclidean density of $M$ at $x$ by
  \begin{equation*}
    \Theta^m (M, x) = \lim_{r \to 0} \frac{\mathcal{H}^m (B_r(x) \cap M)}{\omega_m r^m},
  \end{equation*}
  where $\omega_m$ is the volume of the $m$-dimensional unit ball. Then, for any $x \in M$, we have that
  \begin{equation*}
    \lambda(M) \geq \Theta^m (M, x)
  \end{equation*}
  whenever the limit exists.
\end{proposition}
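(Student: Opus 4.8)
The plan is to bound $\lambda(M)$ from below by testing the supremum in its definition against the single one-parameter family $t_0 \mapsto F_{x,t_0}(M)$ obtained by centring at the point $x$ itself, and then letting $t_0 \to 0^+$. Since $\lambda(M) \ge F_{x,t_0}(M)$ for every $t_0 > 0$, it suffices to prove that
\[
  \liminf_{t_0 \to 0^+} F_{x,t_0}(M) \ge \Theta^m(M,x).
\]
The heuristic is that the weight $(4\pi t_0)^{-m/2}\,\e^{-|y-x|^2/4t_0}$ concentrates near $x$ at scale $\sqrt{t_0}$, and the Gaussian normalisation is precisely the one that, in the limit, reads off the Euclidean density; this is the short-time expansion of Huisken's $F$-functional.

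First I would rewrite $F_{x,t_0}(M)$ in terms of the volume-growth function $A(r) := \mathcal{H}^m(B_r(x) \cap M)$. Using the elementary identity $\e^{-|y-x|^2/4t_0} = \int_{|y-x|}^{\infty} \frac{r}{2t_0}\,\e^{-r^2/4t_0}\,\dx{r}$ and Tonelli's theorem --- all integrands being non-negative, and $A(r) < \infty$ for small $r$ because the density limit exists --- one obtains
\[
  F_{x,t_0}(M) = (4\pi t_0)^{-\frac{m}{2}} \int_0^{\infty} A(r)\,\frac{r}{2t_0}\,\e^{-\frac{r^2}{4t_0}}\,\dx{r}.
\]
Then I would insert the density hypothesis: given $\epsilon > 0$, choose $\rho > 0$ so that $A(r) \ge (\Theta - \epsilon)\,\omega_m\,r^m$ for all $0 < r \le \rho$, where $\Theta := \Theta^m(M,x)$. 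Discarding the non-negative tail $\{r > \rho\}$ and substituting $r = 2\sqrt{t_0}\,u$, the powers of $t_0$ cancel identically and one is left with
\[
  F_{x,t_0}(M) \ge (\Theta - \epsilon)\,\frac{2\,\omega_m}{\pi^{m/2}} \int_0^{\rho/(2\sqrt{t_0})} u^{m+1}\,\e^{-u^2}\,\dx{u}.
\]
As $t_0 \to 0^+$ the upper limit tends to $\infty$, the integral converges to $\int_0^{\infty} u^{m+1}\e^{-u^2}\,\dx{u} = \tfrac{1}{2}\Gamma(\tfrac{m}{2}+1)$, and since $\omega_m = \pi^{m/2}/\Gamma(\tfrac{m}{2}+1)$ the resulting constant $\tfrac{2\omega_m}{\pi^{m/2}}\cdot\tfrac{1}{2}\Gamma(\tfrac{m}{2}+1)$ is exactly $1$. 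Hence $\liminf_{t_0\to 0^+} F_{x,t_0}(M) \ge \Theta - \epsilon$; letting $\epsilon \to 0$ and taking the supremum over $t_0$ gives $\lambda(M) \ge \Theta^m(M,x)$.

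I do not expect a genuine obstacle here: the argument is a short computation, and the only points that deserve a word of care are the Tonelli rearrangement (legitimate because the integrand is non-negative) and the fact that discarding the tail $\{r>\rho\}$ is harmless (only a lower bound is needed). If one wants to allow $\Theta^m(M,x) = +\infty$, one replaces $\Theta - \epsilon$ throughout by an arbitrary constant $L$, and the same computation forces $\lambda(M) \ge L$ for every $L$. Combining the lower bound on $A(r)$ near $x$ with a matching upper bound and a crude global bound on the tail (available since $M$ is an immersed surface) one would even obtain $\lim_{t_0 \to 0^+} F_{x,t_0}(M) = \Theta^m(M,x)$, but this refinement is not needed for the stated inequality.
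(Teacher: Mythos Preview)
Your argument is correct: the layer-cake rewriting of the Gaussian weight in terms of $A(r)$, the substitution $r = 2\sqrt{t_0}\,u$, and the Gamma-function normalisation all check out, and the use of Tonelli and the discarding of the tail are justified exactly as you say. Note, however, that the paper does not supply its own proof of this proposition at all --- it is simply quoted from White's lecture notes as a cited fact --- so there is nothing to compare your approach against. Your write-up would serve perfectly well as the missing proof.
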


\section{Curve shortening flow in any codimension}
\label{sec:CSF}

Let $\gamma: S^1 \times [0, \omega ) \to \R^n$ be a one-parameter family of smooth immersions of curves satisfying \labelcref{eq:CSF}.
Then, for any smooth initial curve $\gamma_0: S^1 \to \R^n$, there exists a unique smooth solution on some time interval $[0, \omega )$, $0 < \omega \leq \infty$ \cite{gage1986heat}. In fact, by \cref{prop:EvolF} below, we have $\omega \leq \frac{1}{2} \max \abs{\gamma_0}^2$.

\subsection{Evolution equations}
\label{subsec:EvolEqns}

In the following propositions, we state the relevant evolution equations in terms of the Frenet--Serret frame. Their analogues have previously been derived for planar curves by Gage and Hamilton \cite{gage1986heat} and for space curves by Altschuler \cite{altschuler1991singularities}; in the general case some of them appeared in various places in the literature \cite{yang2005curve,ma2007curve,hattenschweiler2015curve}.

Recall that we have set $s$ to be the arclength parameter and $v = \abs{\pd{\gamma}{p}}$.
The following two statements can be proved exactly as in the planar case \cite{gage1986heat}.

\begin{proposition}
  The evolution of $v$ is given by
  \begin{equation*}
    \pd{}{t} v = -\kappa^2 v.
  \end{equation*}
\end{proposition}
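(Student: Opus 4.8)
The plan is to compute the time derivative of $v = \lvert\partial_p\gamma\rvert$ directly from the flow equation \labelcref{eq:CSF}, exploiting that $t$ and $p$ are independent variables so that $\partial_t$ and $\partial_p$ commute. First I would write $v^2 = \langle \partial_p\gamma, \partial_p\gamma\rangle$ and differentiate in $t$:
\begin{equation*}
  2 v \pd{v}{t} = 2 \left\langle \pd{}{p}\pd{\gamma}{t},\, \pd{\gamma}{p} \right\rangle = 2 \left\langle \pd{}{p}(\kappa N),\, \pd{\gamma}{p} \right\rangle.
\end{equation*}
Next I would convert the $p$-derivative into an arclength derivative using \labelcref{eq:dds}, so that $\partial_p\gamma = v T$ and $\partial_p(\kappa N) = v\,\partial_s(\kappa N)$, giving $v\,\partial_t v = v^2\langle \partial_s(\kappa N), T\rangle$, hence $\partial_t v = v\langle \partial_s(\kappa N), T\rangle$.

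It then remains to evaluate $\langle \partial_s(\kappa N), T\rangle$. Expanding by the product rule, $\partial_s(\kappa N) = (\partial_s\kappa) N + \kappa\,\partial_s N$, and since $\langle N, T\rangle = 0$ the first term drops out. For the second term I would invoke the Frenet--Serret equations \labelcref{eq:FrenetSerret}, which give $\partial_s N = -\kappa T + \tau_1 B_1$; pairing with $T$ and using orthonormality of the frame yields $\langle \partial_s N, T\rangle = -\kappa$. Therefore $\langle \partial_s(\kappa N), T\rangle = -\kappa^2$, and substituting back gives $\partial_t v = -\kappa^2 v$, as claimed.

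There is no real obstacle here; the only points requiring a little care are the commutation of $\partial_t$ and $\partial_p$ (valid because they are coordinate derivatives on the fixed product $S^1\times[0,\omega)$, and $v$ never vanishes since $\gamma_t$ is an immersion, so dividing by $v$ is legitimate) and the fact that $\kappa N$ is smooth even where $N$ itself is undefined, as noted after \labelcref{eq:CSF}, so the computation is valid pointwise wherever $\kappa \neq 0$ and extends by continuity. This is exactly the planar computation of Gage--Hamilton \cite{gage1986heat}, unchanged in higher codimension because only the first two rows of \labelcref{eq:FrenetSerret} enter.
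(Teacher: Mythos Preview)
Your proof is correct and follows essentially the same approach as the paper: differentiate $v^2$ in time, commute $\partial_t$ with $\partial_p$, substitute the flow equation, and use the Frenet--Serret relations together with orthonormality of the frame to reduce the inner product to $-\kappa^2$. The only cosmetic difference is that you pass to the arclength derivative and divide by $v$ one line earlier than the paper does.
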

\begin{proof}
  The operators $\pd{}{p}$ and $\pd{}{t}$ commute, hence
  \begin{align*}
    2 v \pd{}{t} v = \pd{}{t} (v^2) &= 2 \left\langle \pd{\gamma}{p}, \pddm{\gamma}{p}{t} \right\rangle \\
    &= 2 \left\langle v T, \pd{}{p} \left( \kappa N \right) \right\rangle \\
    &= 2 \left\langle v T, \pd{\kappa}{p} N - v \kappa^2 T + v \kappa \tau_1 B_1 \right\rangle \\
    &= -2 v^2 \kappa^2,
  \end{align*}
  where we have used the Frenet--Serret equations \labelcref{eq:FrenetSerret}, \labelcref{eq:CSF}, and that the vectors $(T, N, B_1)$ are orthonormal.
\end{proof}

Since the arclength parameter $s$ depends on $t$, we cannot expect that the operators $\pd{}{s}$ and $\pd{}{t}$ commute. Instead, we have

\begin{proposition}\label{prop:CommutationFormula}
  Differentiation with respect to $s$ and $t$ is related by the commutation formula
  \begin{equation}\label{eq:CommutationFormula}
  	\pddm{}{t}{s} = \pddm{}{s}{t}  + \kappa^2 \pd{}{s}.
  \end{equation}
\end{proposition}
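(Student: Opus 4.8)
The plan is to reduce everything to the relation $\pd{}{s} = \frac{1}{v}\pd{}{p}$ from \labelcref{eq:dds}, together with the evolution equation $\pd{}{t}v = -\kappa^2 v$ from the previous proposition, and the fact that $\pd{}{p}$ and $\pd{}{t}$ commute (which holds because $p$ is a fixed material parameter). These three ingredients should be enough; no geometry of the frame is needed.

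Concretely, I would let $f$ be an arbitrary smooth function (or vector-valued map) on $S^1 \times [0,\omega)$ and compute $\pddm{f}{t}{s}$ by writing $\pd{f}{s} = \frac{1}{v}\pd{f}{p}$ and differentiating in $t$ via the product rule:
\begin{equation*}
  \pd{}{t}\!\left(\frac{1}{v}\pd{f}{p}\right) = -\frac{1}{v^2}\!\left(\pd{v}{t}\right)\pd{f}{p} + \frac{1}{v}\pd{}{t}\pd{f}{p}.
\end{equation*}
Substituting $\pd{v}{t} = -\kappa^2 v$ turns the first term into $\kappa^2 \frac{1}{v}\pd{f}{p} = \kappa^2 \pd{f}{s}$, and commuting $\pd{}{t}$ with $\pd{}{p}$ in the second term gives $\frac{1}{v}\pd{}{p}\pd{f}{t} = \pd{}{s}\pd{f}{t}$. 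Adding these yields $\pddm{f}{t}{s} = \pddm{f}{s}{t} + \kappa^2\pd{f}{s}$, which is \labelcref{eq:CommutationFormula}.

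There is essentially no obstacle here: the only subtlety worth a sentence is the justification that $\pd{}{p}$ and $\pd{}{t}$ commute (already used in the proof of the evolution of $v$), so that the second term simplifies as claimed; everything else is the chain rule and the product rule. I would present the computation for a scalar $f$ and remark that it applies componentwise to vector-valued quantities, since $\pd{}{s}$, $\pd{}{t}$ and multiplication by the scalar $\kappa^2$ all act componentwise.
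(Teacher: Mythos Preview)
Your proof is correct and essentially identical to the paper's: both use \labelcref{eq:dds}, the evolution $\pd{v}{t}=-\kappa^2 v$, and the commutativity of $\pd{}{p}$ and $\pd{}{t}$ to derive the formula. The only cosmetic difference is that the paper writes the computation at the level of operators rather than applying them to a test function $f$.
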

\begin{proof}
  Using \cref{eq:dds}, we obtain
  \begin{equation*}
    \pd{}{t} \pd{}{s} = \kappa^2 \frac{1}{v} \pd{}{p} + \frac{1}{v} \pd{}{p} \pd{}{t} = \kappa^2 \pd{}{s} + \pd{}{s} \pd{}{t},
  \end{equation*}
  proving the claim.
\end{proof}

These two propositions and the Frenet--Serret equations \labelcref{eq:FrenetSerret} enable us to derive the evolution equations for the moving frame and the curvature and torsions.

\begin{proposition}
  We have
  \begin{equation}\label{eq:EvolT}
  	\pd{T}{t} = \pdd{T}{s} + \kappa^2 T = \pd{\kappa}{s} N + \kappa \tau_1 B_1.
  \end{equation}
\end{proposition}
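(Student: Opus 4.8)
The plan is to start from the identity $T = \partial\gamma/\partial s$ and differentiate it in $t$, moving the time derivative past the arclength derivative via the commutation formula \labelcref{eq:CommutationFormula} from \cref{prop:CommutationFormula}. Concretely,
\[
  \pd{T}{t} = \pddm{}{t}{s}\gamma = \pddm{}{s}{t}\gamma + \kappa^2 \pd{\gamma}{s} = \pd{}{s}\!\left( \pd{\gamma}{t} \right) + \kappa^2 T,
\]
and then one inserts the flow equation \labelcref{eq:CSF}, $\partial\gamma/\partial t = \kappa N$, so that the right-hand side becomes $\partial_s(\kappa N) + \kappa^2 T$. This already gives the heat-type form of the equation, provided one also observes that $\partial T/\partial s = \kappa N$ is exactly the first line of the Frenet--Serret system \labelcref{eq:FrenetSerret}, whence $\partial^2 T/\partial s^2 = \partial_s(\kappa N)$; so the first two expressions in \labelcref{eq:EvolT} coincide.

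The second step is to expand $\partial_s(\kappa N)$ using \labelcref{eq:FrenetSerret} again: since $\partial N/\partial s = -\kappa T + \tau_1 B_1$, the product rule yields
\[
  \pd{}{s}(\kappa N) = \pd{\kappa}{s} N + \kappa\, \pd{N}{s} = \pd{\kappa}{s} N - \kappa^2 T + \kappa \tau_1 B_1.
\]
Adding the $\kappa^2 T$ term from the previous display, the two $\kappa^2 T$ contributions cancel, leaving $\partial T/\partial t = \partial_s\kappa\, N + \kappa \tau_1 B_1$, which is the claimed identity.

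There is no genuine obstacle here: the computation is a direct application of the chain and product rules together with the Frenet--Serret equations. The one point that must be handled with care is that $\partial_s$ and $\partial_t$ do not commute, since the arclength parameter depends on $t$; this is precisely why the proof routes through \cref{prop:CommutationFormula} rather than naively swapping the derivatives. Everything else follows from \labelcref{eq:FrenetSerret} and, implicitly, the fact that $(T,N,B_1,\dots)$ is the moving frame of $\gamma_t$.
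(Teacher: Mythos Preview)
Your proof is correct and follows essentially the same route as the paper: apply the commutation formula \labelcref{eq:CommutationFormula} to $T=\partial_s\gamma$, insert \labelcref{eq:CSF} to obtain $\partial_tT=\partial_s(\kappa N)+\kappa^2T=\partial_s^2T+\kappa^2T$, and then expand $\partial_s(\kappa N)$ via the Frenet--Serret equations to get the cancellation of the $\kappa^2T$ terms. The only cosmetic difference is that the paper writes the intermediate expression as $\partial_s^3\gamma$ rather than $\partial_s(\kappa N)$, but these are identical since $\kappa N=\partial_s^2\gamma$.
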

\begin{proof}
  Using the commutation formula first, followed by the curve shortening flow equation \labelcref{eq:CSF}, we compute
  \begin{align*}
  	\pd{T}{t} &= \pddm{\gamma}{t}{s} \\
  	&= \pddm{\gamma}{s}{t} + \kappa^2 \pd{\gamma}{s} \\
  	&= \pdn{\gamma}{s}{3} + \kappa^2 T \\
  	&= \pdd{T}{s} +  \kappa^2 T.
  \end{align*}
  In order to obtain the second equality, observe that
  \begin{align}
  	\nonumber \pdd{T}{s} &= \left( \pd{\kappa}{s} \right) N + \kappa \pd{N}{s} \\
  	\label{eq:Tss} &= \pd{\kappa}{s} N + \kappa \left( -\kappa T + \tau_1 B_1 \right)
  \end{align}
  by \cref{eq:FrenetSerret}, so that
  \begin{equation*}
   	\pd{T}{t} =  \pd{\kappa}{s} N + \kappa \tau_1 B_1,
  \end{equation*}
  as required.
\end{proof}

\begin{proposition}
  The evolution of the curvature $\kappa$ is given by
  \begin{equation}\label{eq:EvolKappa}
  	\pd{\kappa}{t} = \pdd{\kappa}{s}  + \kappa^3 - \kappa \tau_1^2
  \end{equation}
  whenever $\kappa > 0$.
\end{proposition}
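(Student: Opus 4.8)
The plan is to differentiate the identity $\kappa^2 = \abs{\pd{T}{s}}^2 = \langle \pd{T}{s}, \pd{T}{s} \rangle$ with respect to $t$. Since $\kappa > 0$, the curvature is a smooth function of $(p,t)$ near the point in question, so this is legitimate and yields $2 \kappa \pd{\kappa}{t} = 2 \langle \pddm{T}{t}{s}, \pd{T}{s} \rangle$. Applying the commutation formula \cref{eq:CommutationFormula} to the first factor, the right-hand side becomes $2 \langle \pddm{T}{s}{t}, \pd{T}{s} \rangle + 2 \kappa^2 \abs{\pd{T}{s}}^2 = 2 \langle \pddm{T}{s}{t}, \pd{T}{s} \rangle + 2 \kappa^4$.

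Next I would compute $\pddm{T}{s}{t}$ by differentiating \cref{eq:EvolT}, that is, $\pd{T}{t} = \pd{\kappa}{s} N + \kappa \tau_1 B_1$, once more in $s$ and substituting $\pd{N}{s} = -\kappa T + \tau_1 B_1$ and $\pd{B_1}{s} = -\tau_1 N + \tau_2 B_2$ from the Frenet--Serret equations \cref{eq:FrenetSerret}. Pairing the result with $\pd{T}{s} = \kappa N$ and using the orthonormality of the frame, every term drops out except the two $N$-components: $\pdd{\kappa}{s}$, coming from differentiating the coefficient of $N$, and $-\kappa \tau_1^2$, coming from the contribution of $\kappa \tau_1 \pd{B_1}{s}$. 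Hence $\langle \pddm{T}{s}{t}, \pd{T}{s} \rangle = \kappa \big( \pdd{\kappa}{s} - \kappa \tau_1^2 \big)$, and combining with the previous step gives $2 \kappa \pd{\kappa}{t} = 2 \kappa \big( \pdd{\kappa}{s} - \kappa \tau_1^2 + \kappa^3 \big)$. Dividing by $2 \kappa$, which is permitted exactly because $\kappa > 0$, produces \cref{eq:EvolKappa}.

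There is no genuine obstacle here: once \cref{eq:EvolT} and the commutation formula are available, the derivation is mechanical. The only points demanding care are the use of the hypothesis $\kappa > 0$ — needed both to differentiate $\kappa = \abs{\pd{T}{s}}$ and to cancel the factor $2\kappa$ at the end — and the bookkeeping of which terms survive the inner product with $\kappa N$; the $T$- and $B_1$-components and the higher torsions $\tau_2, \dots, \tau_{n-2}$ all vanish by orthonormality, so that only $\tau_1$ appears in the final equation, exactly as in the space-curve case treated by Altschuler.
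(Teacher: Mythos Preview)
Your proof is correct and follows essentially the same route as the paper: differentiate $\kappa^2 = \langle T_s, T_s \rangle$, apply the commutation formula to swap $\partial_t$ and $\partial_s$, expand $\partial_s T_t$ via \cref{eq:EvolT} and the Frenet--Serret equations, and pair with $\kappa N$. The only cosmetic difference is that you isolate the $\kappa^4$ contribution immediately from the commutator term, whereas the paper carries it along as $\kappa^3 N$ inside the inner product before extracting it.
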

\begin{proof}
  We use the fact that $\kappa^2 = \langle \pd{T}{s}, \pd{T}{s} \rangle$ and get, using the Frenet--Serret equations \labelcref{eq:FrenetSerret} and \cref{eq:EvolT}, that
  \begin{align*}
  	\kappa \pd{\kappa}{t} &= \left\langle \pddm{T}{t}{s}, \pd{T}{s} \right\rangle \\
  	&= \left\langle \pddm{T}{s}{t} + \kappa^3 N, \kappa N \right\rangle \\
  	&= \left\langle \pdd{\kappa}{s} N + \pd{\kappa}{s} \pd{N}{s} + \pd{\kappa}{s} \tau_1 B_1 + \kappa \pd{\tau_1}{s} B_1 + \kappa \tau_1 \pd{B_1}{s} + \kappa^3 N, \kappa N \right\rangle \\
  	&= \kappa \pdd{\kappa}{s} + \kappa^4 + \langle \kappa^2 \tau_1 (-\tau_1 N + \tau_2 B_2), N \rangle \\
  	&= \kappa \pdd{\kappa}{s} + \kappa^4 -\kappa^2 \tau_1^2,
  \end{align*}
  which implies the claim.
\end{proof}

\begin{corollary}\label{cor:EvolKSquared}
  We have
  \begin{equation*}
  	\pd{\kappa^2}{t} = \pdd{\kappa^2}{s}  - 2 \left( \pd{\kappa}{s} \right)^2 + 2\kappa^4 -  2 \kappa^2 \tau_1^2.
  \end{equation*}
\end{corollary}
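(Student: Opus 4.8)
The plan is to read this off from the curvature evolution equation~\labelcref{eq:EvolKappa} by the chain rule, with no further geometric input. First I would record the purely spatial identity obtained by differentiating $\kappa^2$ twice in $s$,
\begin{equation*}
  \pdd{\kappa^2}{s} = \pd{}{s} \left( 2 \kappa \pd{\kappa}{s} \right) = 2 \kappa \pdd{\kappa}{s} + 2 \left( \pd{\kappa}{s} \right)^2,
\end{equation*}
equivalently $2 \kappa \pdd{\kappa}{s} = \pdd{\kappa^2}{s} - 2 ( \pd{\kappa}{s} )^2$. On the time side one has $\pd{\kappa^2}{t} = 2 \kappa \pd{\kappa}{t}$, so multiplying~\labelcref{eq:EvolKappa} through by $2 \kappa$ and inserting the spatial identity yields
\begin{equation*}
  \pd{\kappa^2}{t} = 2 \kappa \pdd{\kappa}{s} + 2 \kappa^4 - 2 \kappa^2 \tau_1^2 = \pdd{\kappa^2}{s} - 2 \left( \pd{\kappa}{s} \right)^2 + 2 \kappa^4 - 2 \kappa^2 \tau_1^2,
\end{equation*}
which is the asserted formula. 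As with~\labelcref{eq:EvolKappa} itself, this is carried out wherever $\kappa > 0$, which is all that is used in the sequel.

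The only point that warrants a comment is the behaviour at points where $\kappa$ vanishes, since there neither~\labelcref{eq:EvolKappa} nor the torsion $\tau_1$ is a priori meaningful. Should one want the identity everywhere, the clean route is to avoid dividing by $\kappa$ altogether and argue with the smooth quantity $\abs{\pd{T}{s}}^2 = \kappa^2$: differentiate in $t$, commute $\pd{}{t}$ past $\pd{}{s}$ using the commutation formula~\labelcref{eq:CommutationFormula}, and substitute~\cref{eq:EvolT} together with the expansion~\cref{eq:Tss}; the binormal contribution then reassembles into $- 2 \kappa^2 \tau_1^2$, which at the zeros of $\kappa$ is read as the continuous extension of that product.

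I do not expect any genuine obstacle: the content is a one-line differentiation of~\labelcref{eq:EvolKappa}. The only things to watch are keeping the cross term $- 2 ( \pd{\kappa}{s} )^2$ — which is precisely the Leibniz defect between $\pdd{\kappa^2}{s}$ and $2 \kappa \pdd{\kappa}{s}$ — and tracking the factors of $2$ correctly.
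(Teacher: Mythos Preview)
Your derivation is correct and is exactly the intended one: the paper states this as a corollary of~\labelcref{eq:EvolKappa} without proof, and the implied argument is precisely multiplying~\labelcref{eq:EvolKappa} by $2\kappa$ and rewriting $2\kappa\,\pdd{\kappa}{s}$ via the Leibniz identity for $\pdd{\kappa^2}{s}$. Your remark on the $\kappa=0$ locus is a reasonable aside but is not needed for how the corollary is used downstream.
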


\begin{proposition}
  The normal vector field $N$ satisfies
  \begin{equation}\label{eq:EvolN}
  	\pd{N}{t} = - \pd{\kappa}{s} T + \left( \pd{\tau_1}{s} + \frac{2 \tau_1}{\kappa} \pd{\kappa}{s} \right) B_1 + \tau_1 \tau_2 B_2.
  \end{equation}
\end{proposition}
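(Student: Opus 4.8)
The plan is to avoid computing $\partial_t B_1$ (which is not yet available at this point) by using instead the relation $N = \kappa^{-1}\,\pd{T}{s}$, valid wherever $\kappa > 0$, so that $\pd{N}{t}$ becomes expressible through the evolution equation \labelcref{eq:EvolT} for the tangent vector and the commutation formula \labelcref{eq:CommutationFormula}. Differentiating in $t$,
\begin{equation*}
  \pd{N}{t} = \pd{}{t}\!\left(\frac{1}{\kappa}\,\pd{T}{s}\right) = -\frac{1}{\kappa^2}\,\pd{\kappa}{t}\,\pd{T}{s} + \frac{1}{\kappa}\,\pddm{T}{t}{s},
\end{equation*}
and since $\pd{T}{s} = \kappa N$ the first summand is simply $-\kappa^{-1}\,\pd{\kappa}{t}\,N$.

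For the second summand I would use \cref{prop:CommutationFormula} to write $\pddm{T}{t}{s} = \pddm{T}{s}{t} + \kappa^2\,\pd{T}{s}$, the latter term contributing $\kappa^3 N$. To handle $\pddm{T}{s}{t}$, substitute \labelcref{eq:EvolT} in the form $\pd{T}{t} = \pd{\kappa}{s}\,N + \kappa\tau_1 B_1$, differentiate once more in $s$, and expand $\pd{N}{s}$ and $\pd{B_1}{s}$ using the Frenet--Serret equations \labelcref{eq:FrenetSerret}. Since $\pd{B_1}{s}$ reaches only $B_2$ among the binormals, this produces a combination of $T$, $N$, $B_1$, $B_2$ alone, with $T$-coefficient $-\kappa\,\pd{\kappa}{s}$, $B_1$-coefficient $2\tau_1\,\pd{\kappa}{s} + \kappa\,\pd{\tau_1}{s}$, and $B_2$-coefficient $\kappa\tau_1\tau_2$.

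The one point that requires care is the $N$-component. After adding the $\kappa^3 N$ contribution, the total coefficient of $N$ in $\pddm{T}{t}{s}$ equals $\pdd{\kappa}{s} - \kappa\tau_1^2 + \kappa^3$, which by the curvature evolution equation \labelcref{eq:EvolKappa} is exactly $\pd{\kappa}{t}$. Dividing by $\kappa$ thus yields a term $\kappa^{-1}\,\pd{\kappa}{t}\,N$ that cancels the $-\kappa^{-1}\,\pd{\kappa}{t}\,N$ from the first summand, and collecting the remaining coefficients gives precisely \labelcref{eq:EvolN}. The main obstacle is therefore purely bookkeeping: one must track the $N$-component through both uses of Frenet--Serret and recognise it as $\pd{\kappa}{t}$ rather than leaving it in expanded form, whereupon the cancellation makes the formula drop out; the $T$-, $B_1$- and $B_2$-terms are a routine expansion in the moving frame.
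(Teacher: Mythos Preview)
Your proposal is correct and follows exactly the approach indicated in the paper: write $N = \kappa^{-1}\,\pd{T}{s}$, differentiate in $t$, apply the commutation formula \labelcref{eq:CommutationFormula} and the evolution equations \labelcref{eq:EvolT}, \labelcref{eq:EvolKappa} together with the Frenet--Serret relations \labelcref{eq:FrenetSerret}. The paper merely sketches this computation, whereas you carry out the bookkeeping explicitly and correctly identify the $N$-coefficient as $\pd{\kappa}{t}$, yielding the cancellation.
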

\begin{proof}
  From the Frenet--Serret equations \labelcref{eq:FrenetSerret} we have $\pd{T}{s} = \kappa N$, so
  \begin{equation*}
  	\pd{N}{t} = \pd{}{t} \left( \frac{1}{\kappa} \pd{T}{s} \right).
  \end{equation*}
  The result then follows by a computation similar to the ones above using the Frenet--Serret equations \labelcref{eq:FrenetSerret}, the commutation formula \labelcref{eq:CommutationFormula}, and the evolution equations for $T$ and $\kappa$, \cref{eq:EvolT,eq:EvolKappa}.
\end{proof}

\begin{proposition}
  The evolution of the first torsion $\tau_1$ is given by
  \begin{equation*}
  	\pd{\tau_1}{t} = \pdd{\tau_1}{s} + \frac{2}{\kappa} \pd{\kappa}{s} \pd{\tau_1}{s} + \frac{2 \tau_1}{\kappa} \left( \pdd{\kappa}{s} - \frac{1}{\kappa} \left( \pd{\kappa}{s} \right)^2 + \kappa^3 \right) - \tau_1 \tau_2^2.
  \end{equation*}
\end{proposition}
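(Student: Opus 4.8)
The plan is to avoid differentiating the defining expression $\tau_1 = \abs{\pd{N}{s} + \kappa T}$ head-on, and instead use that $\pd{N}{s} + \kappa T = \tau_1 B_1$ by the Frenet--Serret equations \labelcref{eq:FrenetSerret}, so that $\tau_1$ is simply the $B_1$-component of the vector field $W := \pd{N}{s} + \kappa T$. Differentiating $W = \tau_1 B_1$ in $t$ and also differentiating $W = \pd{N}{s} + \kappa T$ in $t$ gives
\[
  \pd{\tau_1}{t} B_1 + \tau_1 \pd{B_1}{t} = \pd{W}{t} = \pddm{N}{t}{s} + \pd{\kappa}{t} T + \kappa \pd{T}{t},
\]
and since $\abs{B_1} \equiv 1$ forces $\langle \pd{B_1}{t}, B_1 \rangle = 0$, taking the inner product with $B_1$ yields
\[
  \pd{\tau_1}{t} = \left\langle \pddm{N}{t}{s} + \pd{\kappa}{t} T + \kappa \pd{T}{t}, B_1 \right\rangle.
\]
Thus the task reduces to reading off the $B_1$-components of the three vector fields on the right.

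The middle term drops out, being parallel to $T$. By \cref{eq:EvolT} we have $\pd{T}{t} = \pd{\kappa}{s} N + \kappa \tau_1 B_1$, so $\kappa \pd{T}{t}$ contributes $\kappa^2 \tau_1$. For the mixed derivative I would apply the commutation formula \labelcref{eq:CommutationFormula} in the form $\pddm{N}{t}{s} = \pd{}{s}\bigl( \pd{N}{t} \bigr) + \kappa^2 \pd{N}{s}$. Here $\pd{N}{s} = -\kappa T + \tau_1 B_1$ contributes $\kappa^2 \tau_1$ after multiplication by $\kappa^2$, while $\pd{N}{t}$ is given explicitly by \cref{eq:EvolN}; differentiating that expression along $s$ with the Frenet--Serret relations \labelcref{eq:FrenetSerret}, the $T$-directed and $B_2$-directed summands produce nothing along $B_1$, the coefficient of $B_1$ differentiates to $\pdd{\tau_1}{s} + \pd{}{s}\bigl( \frac{2 \tau_1}{\kappa} \pd{\kappa}{s} \bigr)$, and the term $\tau_1 \tau_2 B_2$ feeds in $-\tau_1 \tau_2^2$ through $\pd{B_2}{s} = -\tau_2 B_1 + \tau_3 B_3$. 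Expanding $\pd{}{s}\bigl( \frac{2 \tau_1}{\kappa} \pd{\kappa}{s} \bigr) = \frac{2}{\kappa} \pd{\kappa}{s} \pd{\tau_1}{s} + \frac{2 \tau_1}{\kappa} \pdd{\kappa}{s} - \frac{2 \tau_1}{\kappa^2} \bigl( \pd{\kappa}{s} \bigr)^2$ and collecting everything, the two copies of $\kappa^2 \tau_1$ combine as $2 \kappa^2 \tau_1 = \frac{2 \tau_1}{\kappa} \kappa^3$, and the terms reassemble into the asserted right-hand side.

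I do not anticipate a real obstacle: this is bookkeeping of exactly the kind already carried out for \cref{eq:EvolT,eq:EvolKappa,eq:EvolN}, and the only point deserving attention is that the computation takes place at points where $\kappa > 0$, where $\tau_1$, $B_1$ and the frame depend smoothly on $(p,t)$ and the divisions by $\kappa$ are legitimate, just as in \cref{eq:EvolKappa}. A slightly more laborious alternative would be to start from $\tau_1^2 = \abs{\pd{N}{s}}^2 - \kappa^2$, differentiate in $t$, and insert $\pddm{N}{t}{s}$ together with \cref{eq:EvolKappa}; this gives the same answer but requires pairing $\pddm{N}{t}{s}$ against the full vector $-\kappa T + \tau_1 B_1$ rather than just $B_1$, so I would keep the first route.
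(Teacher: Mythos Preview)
Your approach is correct and actually a bit cleaner than the paper's. The paper proceeds via the scalar identity $\bigl\langle \pd{N}{s}, \pd{N}{s} \bigr\rangle = \kappa^2 + \tau_1^2$, differentiates it in $t$ to obtain $\tau_1 \pd{\tau_1}{t} + \kappa \pd{\kappa}{t} = \bigl\langle \pddm{N}{t}{s}, \pd{N}{s} \bigr\rangle$, and then substitutes the commutation formula together with the evolution equations \labelcref{eq:EvolN} and \labelcref{eq:EvolKappa} --- precisely the route you describe at the end as the ``slightly more laborious alternative.'' Your main route, projecting $\pd{}{t}(\tau_1 B_1)$ onto $B_1$, has the advantage that the $\pd{\kappa}{t}\,T$ term is orthogonal to $B_1$ and drops out immediately, so \cref{eq:EvolKappa} is never invoked; only \cref{eq:EvolT}, \cref{eq:EvolN}, the commutation formula \labelcref{eq:CommutationFormula}, and the Frenet--Serret relations are needed. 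The paper's method, by contrast, pairs $\pddm{N}{t}{s}$ against the full vector $\pd{N}{s} = -\kappa T + \tau_1 B_1$ and must then cancel the resulting $\kappa\pd{\kappa}{t}$ against the left-hand side.

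One small slip in your write-up: you first assert that ``the $T$-directed and $B_2$-directed summands produce nothing along $B_1$,'' and then immediately (and correctly) record that the summand $\tau_1\tau_2 B_2$ contributes $-\tau_1\tau_2^2$ through $\pd{B_2}{s} = -\tau_2 B_1 + \tau_3 B_3$. The second statement is the right one; only the $T$-directed summand $-\pd{\kappa}{s}T$ of $\pd{N}{t}$ is genuinely inert along $B_1$ after differentiation (since $\pd{T}{s} = \kappa N$). With that correction the bookkeeping closes exactly as you indicate, and the caveat $\kappa>0$ is all that is required for the frame and the divisions by $\kappa$ to make sense.
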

\begin{proof}
  Note that, by the Frenet--Serret equations \labelcref{eq:FrenetSerret},
  \begin{equation*}
    \left\langle \pd{N}{s}, \pd{N}{s} \right\rangle = \kappa^2 + \tau_1^2.
  \end{equation*}
  Differentiating this equation with respect to $t$ yields \cite{yang2005curve}
  \begin{equation*}
    \tau_1 \pd{\tau_1}{t} + \kappa \pd{\kappa}{t} = \left\langle \pddm{N}{t}{s}, \pd{N}{s} \right\rangle.
  \end{equation*}
  Using \cref{eq:FrenetSerret}, the commutation formula \labelcref{eq:CommutationFormula} and the evolution equations for $N$ and $\kappa$, \cref{eq:EvolN,eq:EvolKappa}, we obtain the result.
\end{proof}

\begin{proposition}\label{prop:EvolF}
  We have
  \begin{equation*}%
  	\pd{\abs{\gamma}^2}{t} = \pdd{\abs{\gamma}^2}{s} - 2.
  \end{equation*}
\end{proposition}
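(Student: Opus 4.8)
The plan is to compute $\pd{}{t}\abs{\gamma}^2$ directly, using only the curve shortening flow equation \labelcref{eq:CSF}, the commutation formula \labelcref{eq:CommutationFormula}, and the Frenet--Serret relation $\pd{\gamma}{s}=T$. First I would write $\abs{\gamma}^2=\langle\gamma,\gamma\rangle$ and differentiate in $t$, so that
\begin{equation*}
  \pd{}{t}\abs{\gamma}^2 = 2\left\langle\gamma,\pd{\gamma}{t}\right\rangle = 2\langle\gamma,\kappa N\rangle,
\end{equation*}
using \labelcref{eq:CSF}.

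Next I would compute the spatial Laplacian. Since $\pd{\gamma}{s}=T$, we have $\pd{}{s}\abs{\gamma}^2 = 2\langle\gamma,T\rangle$, and differentiating once more in $s$ while using the Frenet--Serret equation $\pd{T}{s}=\kappa N$ gives
\begin{equation*}
  \pdd{\abs{\gamma}^2}{s} = 2\langle T,T\rangle + 2\left\langle\gamma,\pd{T}{s}\right\rangle = 2 + 2\langle\gamma,\kappa N\rangle,
\end{equation*}
where $\langle T,T\rangle=1$ because $T$ is a unit vector. Comparing the two displays yields $\langle\gamma,\kappa N\rangle = \tfrac12\pdd{\abs{\gamma}^2}{s} - 1$, and substituting back into the first computation gives the claimed identity $\pd{}{t}\abs{\gamma}^2 = \pdd{\abs{\gamma}^2}{s} - 2$.

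One subtlety to be careful about is that the arclength parameter $s$ depends on $t$, so strictly one should either work with the parameter $p$ throughout (in which case the commutation formula \labelcref{eq:CommutationFormula} is not even needed, only $\pd{}{s}=\tfrac1v\pd{}{p}$ and the fact that $\pd{}{p}$, $\pd{}{t}$ commute) or invoke \labelcref{eq:CommutationFormula} to justify interchanging the $t$- and $s$-derivatives; either way the extra $\kappa^2\pd{}{s}$ term cancels because the quantity being differentiated, $\abs{\gamma}^2$, is computed before any such exchange is performed. I do not expect any real obstacle here: the only mild care needed is to handle the points where $\kappa=0$ (and $N$ is undefined), but the product $\kappa N$ and hence $\langle\gamma,\kappa N\rangle$ extends smoothly as noted in the introduction, so the identity holds at every point. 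As an immediate consequence, integrating in time and using $\pdd{\abs{\gamma}^2}{s}$ to be negative where $\abs{\gamma}^2$ is maximal gives $\max\abs{\gamma_t}^2 \leq \max\abs{\gamma_0}^2 - 2t$, forcing $\omega\leq\tfrac12\max\abs{\gamma_0}^2$, which is the finiteness bound cited before the proposition.
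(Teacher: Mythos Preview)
Your proof is correct and follows essentially the same route as the paper: compute $\pd{}{t}\abs{\gamma}^2 = 2\langle\gamma,\kappa N\rangle$ from \labelcref{eq:CSF}, compute $\pdd{}{s}\abs{\gamma}^2 = 2 + 2\langle\gamma,\kappa N\rangle$ using $\pd{T}{s}=\kappa N$, and compare (the paper writes $\pdd{\gamma}{s}$ in place of your $\kappa N$, but these are equal). Your remark about the commutation formula is a bit of a red herring---as you yourself note, no interchange of $t$- and $s$-derivatives is actually performed, so \labelcref{eq:CommutationFormula} plays no role here.
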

\begin{proof}
  Since $\pd{T}{s} = \kappa N$, the curve shortening flow equation can also be written as
  \begin{equation*}
    \pd{\gamma}{t} = \pdd{\gamma}{s}.
  \end{equation*}
  While this resembles a heat equation, note that the arclength parameter $s$ depends on the time $t$. This implies that
  \begin{align*}
  	\pd{\abs{\gamma}^2}{t} &= 2 \left\langle \pd{\gamma}{t}, \gamma \right\rangle \\
  	&= 2 \left\langle \pdd{\gamma}{s}, \gamma \right\rangle.
  \end{align*}
  Finally, note that
  \begin{align*}
  	\pdd{\abs{\gamma}^2}{s} &= \pdd{}{s} \left\langle \gamma, \gamma \right\rangle \\
  	&= 2 \left\langle \pdd{\gamma}{s}, \gamma \right\rangle + 2 \left\langle \pd{\gamma}{s}, \pd{\gamma}{s} \right\rangle \\
  	&= 2 \left\langle \pdd{\gamma}{s}, \gamma \right\rangle + 2,
  \end{align*}
  which proves the claim.
\end{proof}

\subsection{Scaling-invariant estimates}

As in the space curve case, we have scaling-invariant estimates on the derivatives of the tangent vector, and thus the derivatives of the curvature, depending only on the maximal curvature at the initial time.

We saw in \cref{eq:EvolKappa} that even in higher codimension, the evolution of the curvature still only involves the curvature $\kappa$ itself and the first torsion $\tau_1$, but none of the higher torsions. Hence we may argue exactly as Altschuler \cite{altschuler1991singularities} and obtain the following result.
For brevity, we write, e.\,g., $\pd{T}{s} \equiv T_s$ and $\mth{T}{m} \equiv \pdn{T}{s}{m}$ for the derivatives of the tangent vector $T$ \cite[cf.][Ch.~2.6]{andrews2020extrinsic}.

\begin{theorem}[cf. {\cites[Thm.~3.1]{altschuler1991singularities}[Thm.~3.1]{yang2005curve}[Thm.~3.6]{hattenschweiler2015curve}}]
  \label{thm:CurvatureEstimates}
  For any $m \geq 1$ there exists $C_m < \infty$ such that for $t \in (0, \frac{1}{8 K_0}]$, where $K_t := \sup \kappa^2(\cdot, t)$, it holds that
  \begin{equation*}
    \abs{\pdn{T}{s}{m}}^2 \leq \frac{C_m K_0}{t^{m-1}}.
  \end{equation*}
\end{theorem}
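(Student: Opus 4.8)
The plan is to follow the Bernstein-type argument for parabolic equations, exactly as in Altschuler's treatment of space curves, using the fact — already emphasised before the statement — that the evolution of $\kappa$ (and hence the structure of the equations for the derivatives of $T$) involves only $\kappa$ and $\tau_1$, never the higher torsions. First I would record the evolution equation for $\mth{T}{m} = \pdn{T}{s}{m}$. Differentiating \labelcref{eq:EvolT} in $s$ and repeatedly applying the commutation formula \labelcref{eq:CommutationFormula}, one gets a reaction–diffusion equation of the schematic form
\begin{equation*}
  \pd{}{t} \mth{T}{m} = \pdd{\mth{T}{m}}{s} + \sum (\text{lower-order terms}),
\end{equation*}
where each lower-order term is a contraction of $\mth{T}{j}$'s and $\mth{\kappa}{i}$'s, $\mth{(\tau_1)}{i}$'s with total number of $s$-derivatives $\le m+1$ and at least one factor carrying fewer than $m$ derivatives. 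Taking the inner product with $\mth{T}{m}$ and using Kato's inequality, this yields a scalar inequality for $u_m := \abs{\mth{T}{m}}^2$ of the form $\pd{}{t} u_m \le \pdd{u_m}{s} - 2\abs{\mth{T}{m+1}}^2 + (\text{polynomial in lower } u_j, \text{ and in } \kappa\text{-data})$.

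The core of the proof is then an induction on $m$, run simultaneously with a clever choice of auxiliary quantity that absorbs the bad terms. For $m=1$ one has $u_1 = \kappa^2$ and \cref{cor:EvolKSquared} directly gives $\pd{}{t}\kappa^2 \le \pdd{\kappa^2}{s} + 2\kappa^4$; comparing with the ODE $\phi' = 2\phi^2$ and using the maximum principle on the compact curve shows that on $[0,\tfrac{1}{8K_0}]$ we have $K_t \le 2K_0$ (the constant and the length of the interval being chosen precisely so the comparison ODE does not blow up). With $\kappa^2$ thus bounded by $2K_0$ throughout, $\tau_1^2$ enters only with a favourable sign in the places where it matters, or can be bounded in terms of lower derivatives of $\kappa$ via $\abs{T_{ss}}^2 = \kappa_s^2 + \kappa^2(\kappa^2+\tau_1^2)$ and its higher analogues, so that all the "lower-order" data appearing in the equation for $u_m$ is ultimately controlled by $u_1,\dots,u_{m}$ and $K_0$. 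For the inductive step, assuming $u_j \le C_j K_0 / t^{j-1}$ for $j < m$, I would test the function $w := t^{m-1} u_m + \beta\, t^{m-2} u_{m-1} + \dots$ — more precisely the standard telescoping combination $w = \sum_{j=1}^{m} \beta_j\, t^{j-1} u_j$ with constants $\beta_j$ chosen large enough (depending on the earlier $C_i$) that the production terms in $\pd{}{t} w$ are dominated by the negative gradient terms $-2t^{j-1}\abs{\mth{T}{j+1}}^2$ supplied by the next index down. The extra $t$-weights are exactly what is needed: the term $\pd{}{t}(t^{m-1}u_m)$ produces a harmless $(m-1)t^{m-2}u_m$, while the Cauchy–Schwarz/Young splitting of the cross terms against the gradient terms closes the estimate. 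The maximum principle applied to $w$ on $S^1\times[0,T]$ then gives $w \le C K_0$, hence the claimed bound on $u_m = \abs{\mth{T}{m}}^2$.

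Two points deserve care. First, since $T$ is only defined where $\kappa>0$, and $N, B_i$ where the relevant torsions are positive, the quantities $\mth{T}{m}$ must be interpreted — following Altschuler — via $\mth{T}{m} = \pdn{\gamma}{s}{m+1}$, which is globally smooth; every identity above should be read as an identity between these globally defined vector fields, so that no issue arises at points where the Frenet frame degenerates. Second, one should verify that the scaling is consistent: under $\gamma \mapsto \lambda\gamma$, $t\mapsto \lambda^2 t$ one has $\kappa \mapsto \lambda^{-1}\kappa$, $s\mapsto\lambda s$, so $K_0 \mapsto \lambda^{-2}K_0$ and $\abs{\mth{T}{m}}^2 \mapsto \lambda^{-2m}\abs{\mth{T}{m}}^2$, while $K_0/t^{m-1}\mapsto \lambda^{-2m}K_0/t^{m-1}$ — so the inequality is indeed scaling invariant, which both explains the shape of the bound and lets one reduce to the case $K_0 = 1$, $t\in(0,\tfrac18]$ if desired.

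The main obstacle I anticipate is bookkeeping rather than conceptual: organising the commutator terms in the equation for $\mth{T}{m}$ so that one can see precisely that every term of "order $m+1$" that is not the Laplacian appears with a coefficient that is a bounded function of $\kappa$ (hence controlled once $K_t\le 2K_0$ is known), and that the genuinely nonlinear production terms always involve a strictly lower derivative which, via Young's inequality, can be traded against $\abs{\mth{T}{m}}^2$ and $\abs{\mth{T}{j+1}}^2$ with the $t$-weights making the constants work. Since the curvature evolution in \cref{eq:EvolKappa} is identical in form to the planar/space-curve case — the higher torsions genuinely drop out — this is exactly the situation handled by Altschuler, and the argument goes through verbatim; accordingly I would state the result with reference to \cite[Thm.~3.1]{altschuler1991singularities} and indicate only the modifications, chiefly the observation in the previous paragraph about interpreting $\mth{T}{m}$ globally.
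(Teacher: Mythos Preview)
Your proposal is correct and follows essentially the same Bernstein-type inductive strategy as the paper. The paper's execution differs only in that it works entirely with the vectors $\mth{T}{m}$ via the defect $Z_m := \mth{T}{m}_t - \mth{T}{m}_{ss}$, for which it proves an explicit structural formula, never decomposing into $\kappa$, $\tau_1$ and their derivatives, and it uses a two-term auxiliary function $\Phi_m = t^{m-1}\abs{\mth{T}{m}}^2 + C\, t^{m-2}\abs{\mth{T}{m-1}}^2$ rather than your full telescoping sum.
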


\begin{proof}
  The commutation formula \labelcref{eq:CommutationFormula} implies
  \begin{equation*}
    T_t = T_{ss} + \abs{T_s}^2 T,
  \end{equation*}
  because $\gamma$ evolves by curve shortening flow. Therefore,
  \begin{align*}
    \abs{T_s}^2_t &= 2 \langle (T_s)_t, T_s \rangle \\
    &= 2 \langle (T_t)_s + \abs{T_s}^2 T_s, T_s \rangle \\
    &= 2 \langle (T_{ss} + \abs{T_s}^2 T)_s, T_s \rangle + 2 \abs{T_s}^4 \\
    &= 2 \langle T_{sss}, T_s \rangle + 2 \abs{T_s}^2_s \langle T, T_s \rangle + 4 \abs{T_s}^4 \\
    &= \abs{T_s}^2_{ss} - 2 \abs{T_{ss}}^2 + 4 \abs{T_s}^4,
  \end{align*}
  as $\abs{T_s}^2_{ss} = 2 \langle T_{sss}, T_s \rangle + 2 \abs{T_{ss}}^2$. We thus have the differential inequality
  \begin{equation*}
    \abs{T_s}^2_t - \abs{T_s}^2_{ss} = - 2 \abs{T_{ss}}^2 + 4 \abs{T_s}^4 \leq 4 \abs{T_s}^4.
  \end{equation*}
  By the ODE comparison principle and $\abs{T_s}^2 = \kappa^2 \leq K_0$ at $t=0$, we have
  \begin{equation*}
    \abs{T_s}^2 \leq \frac{K_0}{1 - 4 K_0 t} \leq 2 K_0,
  \end{equation*}
  as $t \leq \frac{1}{8 K_0}$ by assumption. We thus choose $C_1 = 2$.

  We define
  \begin{equation*}
    Z_m := \mth{T}{m}_t - \mth{T}{m}_{ss},
  \end{equation*}
  using the notation $\mth{T}{m} = \pdn{T}{s}{m}$. Then
  \begin{equation*}
    Z_{m+1} = (Z_m)_s + \abs{T_s}^2 \mth{T}{m+1}
  \end{equation*}
  and
  \begin{align*}
    \abs{\mth{T}{m}}^2_t - \abs{\mth{T}{m}}^2_{ss} &= 2 \langle \mth{T}{m}_t - \mth{T}{m}_{ss}, \mth{T}{m} \rangle - 2 \abs{\mth{T}{m+1}}^2 \\
    &= 2 \langle Z_m, \mth{T}{m} \rangle - 2 \abs{\mth{T}{m+1}}^2.
  \end{align*}
  We already know
  \begin{align*}
    Z_0 &= \abs{T_s}^2 T, \\
    Z_1 &= 2 \langle T_{ss}, T_s \rangle T + 2 \abs{T_s}^2 T_s,
  \end{align*}
  and moreover
  \begin{equation*}
    Z_2 = 2 \langle T_{sss}, T_s \rangle T + 2 \abs{T_{ss}}^2 T + 6 \langle T_{ss}, T_s \rangle T_s + 3 \abs{T_{s}}^2 T_{ss}.
  \end{equation*}

  For $m=2$, define $\Phi := t \abs{T_{ss}}^2 + 4 \abs{T_s}^2$. Then
  \begin{align*}
    \Phi_t - \Phi_{ss}   %
    = &~ \abs{T_{ss}}^2 + 2t \langle (T_{ss})_t, T_{ss} \rangle + 4(\abs{T_s}^2_{ss} - 2 \abs{T_{ss}}^2 + 4 \abs{T_s}^4) \\
    \nonumber &- (2t \langle (T_{ss})_{ss}, T_{ss} \rangle + 2t \abs{T_{sss}}^2 + 4 \abs{T_s}^2_{ss}) \\
    = &~ -7 \abs{T_{ss}}^2 + 2t(\langle (T_{ss})_t - (T_{ss})_{ss}, T_{ss} \rangle - \abs{T_{sss}}^2) + 16 \abs{T_s}^4 \\
    = &~ -7 \abs{T_{ss}}^2 + 2t(\langle Z_2, T_{ss} \rangle - \abs{T_{sss}}^2) + 16 \abs{T_s}^4
  \end{align*}
  Since
  \begin{align*}
    \langle Z_2, T_{ss} \rangle = &~ 2 \langle T_{sss}, T_s \rangle \langle T, T_{ss} \rangle + 2 \abs{T_{ss}}^2 \langle T, T_{ss} \rangle \\
    \nonumber &+ 6 \langle T_{ss}, T_s \rangle^2 + 3 \abs{T_s}^2 \abs{T_{ss}}^2 \\
    = &~ 2 \langle T_{sss}, T_s \rangle \langle T, T_{ss} \rangle - 2 \abs{T_{ss}}^2 \abs{T_s}^2 \\
    \nonumber &+ 6 \langle T_{ss}, T_s \rangle^2 + 3 \abs{T_s}^2 \abs{T_{ss}}^2,
  \end{align*}
  where we have used that $T_{ss} = - \kappa^2 T + \kappa_s N + \kappa \tau_1 B_1$ (see \cref{eq:Tss}) and $\kappa = \abs{T_s}^2$ imply that $\langle T, T_{ss} \rangle = - \abs{T_s}^2$, we obtain
  \begin{align*}
    \Phi_t - \Phi_{ss} = &~ -7 \abs{T_{ss}}^2 + 4t \langle T_{sss}, T_s \rangle \langle T, T_{ss} \rangle - 4t \abs{T_{ss}}^2 \abs{T_s}^2 \\
    \nonumber &+ 12t \langle T_{ss}, T_s \rangle^2 + 6t \abs{T_s}^2 \abs{T_{ss}}^2 - 2t \abs{T_{sss}}^2 + 16 \abs{T_s}^4 \\
    \leq &~ -7 \abs{T_{ss}}^2 + 4t \abs{T_{sss}} \abs{T_{ss}} \abs{T_s} + 14t \abs{T_{ss}}^2 \abs{T_s}^2 - 2t \abs{T_{sss}}^2 + 16 \abs{T_s}^4 \\
    = &~ -7 \abs{T_{ss}}^2 - 2t(\abs{T_{sss}} - \abs{T_{ss}} \abs{T_s})^2 + 16t \abs{T_{ss}}^2 \abs{T_s}^2 + 16 \abs{T_s}^4 \\
    \leq &~ 64 K_0^2 + (32 K_0 t - 7) \abs{T_{ss}}^2 \\
    \leq &~ 64 K_0^2,
  \end{align*}
  using $\abs{T_s}^2 \leq 2 K_0$ and $t \leq \frac{1}{8 K_0}$. At $t=0$ we have $\Phi \leq 4 K_0$, so the ODE comparison principle implies
  \begin{equation*}
    \Phi \leq 64 K_0^2 t + 4 K_0 \leq 12 K_0
  \end{equation*}
  for any $t \leq \frac{1}{8 K_0}$. Therefore,
  \begin{equation*}
    \abs{T_{ss}}^2 \leq \frac{12 K_0}{t},
  \end{equation*}
  and we may choose $C_2 = 12$.

  In general, for $m \geq 3$ we have
  \begin{align}%
    \label{eq:mthformula}
    Z_m = &~ 2 \langle \mth{T}{m+1}, \mth{T}{1} \rangle T \\
    \nonumber &+ 2m \langle \mth{T}{m}, \mth{T}{2} \rangle T \\
    \nonumber &+ 2(m+1) \langle \mth{T}{m}, \mth{T}{1} \rangle \mth{T}{1} \\
    \nonumber &+ (m+1) \abs{\mth{T}{1}}^2 \mth{T}{m} \\
    \nonumber &+ \sum_{\substack{0 \leq i,j,k < m \\ i+j+k=m+2}} n_{ijk} \langle \mth{T}{i}, \mth{T}{j} \rangle \mth{T}{k},
  \end{align}
  where $n_{ijk} = n_{ijk}(m)$ are non-negative integers.
  Indeed,
  \begin{align*}
    Z_3 = &~ (Z_2)_s + \abs{\mth{T}{1}}^2 \mth{T}{3} \\
    = &~ 2 \langle \mth{T}{4}, \mth{T}{1} \rangle T \\
    \nonumber &+ 6 \langle \mth{T}{3}, \mth{T}{2} \rangle T \\
    \nonumber &+ 8 \langle \mth{T}{3}, \mth{T}{1} \rangle \mth{T}{1} \\
    \nonumber &+ 4 \abs{\mth{T}{1}}^2 \mth{T}{3} \\
    \nonumber &+ 8 \abs{\mth{T}{2}}^2 \mth{T}{1} \\
    \nonumber &+ 12 \langle \mth{T}{2}, \mth{T}{1} \rangle \mth{T}{2}.
  \end{align*}
  Now assume that \cref{eq:mthformula} holds for some $m$. Then we have that
  \begin{align*}
    Z_{m+1} = &~ (Z_m)_s + \abs{T_s}^2 \mth{T}{m+1} \\
    = &~ 2 \langle \mth{T}{m+2}, \mth{T}{1} \rangle T + 2 \langle \mth{T}{m+1}, \mth{T}{2} \rangle T + 2 \langle \mth{T}{m+1}, \mth{T}{1} \rangle \mth{T}{1} \\
    \nonumber &+ 2m \langle \mth{T}{m+1}, \mth{T}{2} \rangle T + 2m \langle \mth{T}{m}, \mth{T}{3} \rangle T + 2m \langle \mth{T}{m}, \mth{T}{2} \rangle \mth{T}{1} \\
    \nonumber &+ 2(m+1) \langle \mth{T}{m+1}, \mth{T}{1} \rangle \mth{T}{1} + 2(m+1) \langle \mth{T}{m}, \mth{T}{2} \rangle \mth{T}{1} \\
    \nonumber &+ 2(m+1) \langle \mth{T}{m}, \mth{T}{1} \rangle \mth{T}{2} + 2(m+1) \langle \mth{T}{2}, \mth{T}{1} \rangle \mth{T}{m} \\
    \nonumber &+ (m+1) \abs{\mth{T}{1}}^2 \mth{T}{m+1} \\
    \nonumber &+ \sum_{\substack{0 \leq i,j,k < m \\ i+j+k=m+2}} n_{ijk} \langle \mth{T}{i+1}, \mth{T}{j} \rangle \mth{T}{k} \\
    \nonumber &+ \sum_{\substack{0 \leq i,j,k < m \\ i+j+k=m+2}} n_{ijk} \langle \mth{T}{i}, \mth{T}{j+1} \rangle \mth{T}{k} \\
    \nonumber &+ \sum_{\substack{0 \leq i,j,k < m \\ i+j+k=m+2}} n_{ijk} \langle \mth{T}{i}, \mth{T}{j} \rangle \mth{T}{k+1} + \abs{T_s}^2 \mth{T}{m+1} \\
    = &~ 2 \langle \mth{T}{m+2}, \mth{T}{1} \rangle T + 2(m+1) \langle \mth{T}{m+1}, \mth{T}{2} \rangle T \\
    \nonumber &+ 2(m+2) \langle \mth{T}{m+1}, \mth{T}{1} \rangle \mth{T}{1} + (m+2) \abs{\mth{T}{1}}^2 \mth{T}{m+1} \\
    \nonumber &+ \sum_{\substack{0 \leq i,j,k < m+1 \\ i+j+k=m+3}} \tilde{n}_{ijk} \langle \mth{T}{i}, \mth{T}{j} \rangle \mth{T}{k},
  \end{align*}
  proving the claim.

  We obtain
  \begin{align*}
    \abs{\mth{T}{m}}^2_t - \abs{\mth{T}{m}}^2_{ss} = &~ 2 \langle Z_m, \mth{T}{m} \rangle - 2 \abs{\mth{T}{m+1}}^2 \\
    = &~ 4 \langle \mth{T}{m+1}, \mth{T}{1} \rangle \langle T, \mth{T}{m} \rangle + 4m \langle \mth{T}{m}, \mth{T}{2} \rangle \langle T, \mth{T}{m} \rangle \\
      \nonumber &+ 4(m+1) \langle \mth{T}{m}, \mth{T}{1} \rangle^2 + 2(m+1) \abs{\mth{T}{1}}^2 \abs{\mth{T}{m}}^2 \\
      \nonumber &+ 2 \sum_{\substack{0 \leq i,j,k < m \\ i+j+k=m+2}} n_{ijk} \langle \mth{T}{i}, \mth{T}{j} \rangle \langle \mth{T}{k}, \mth{T}{m} \rangle \\
      \nonumber &- 2 \abs{\mth{T}{m+1}}^2 \\
    \leq &~ - 2 \abs{\mth{T}{m+1}}^2 + 4 \abs{\mth{T}{m+1}} \abs{\mth{T}{1}} \abs{\mth{T}{m}} \\
      \nonumber &- 2 \abs{\mth{T}{1}}^2 \abs{\mth{T}{m}}^2 + 2(m+2) \abs{\mth{T}{1}}^2 \abs{\mth{T}{m}}^2 \\
      \nonumber &+ 4m \langle \mth{T}{m}, \mth{T}{2} \rangle \langle T, \mth{T}{m} \rangle + 4(m+1) \langle \mth{T}{m}, \mth{T}{1} \rangle^2 \\
      \nonumber &+ 2 \sum_{\substack{0 \leq i,j,k < m \\ i+j+k=m+2}} n_{ijk} \langle \mth{T}{i}, \mth{T}{j} \rangle \langle \mth{T}{k}, \mth{T}{m} \rangle \\
    = &~ - 2 \left( \abs{\mth{T}{m+1}} - \abs{\mth{T}{1}} \abs{\mth{T}{m}} \right)^2 \\
      \nonumber &+ 2(m+2) \abs{\mth{T}{1}}^2 \abs{\mth{T}{m}}^2 \\
      \nonumber &+ 4m \langle \mth{T}{m}, \mth{T}{2} \rangle \langle T, \mth{T}{m} \rangle + 4(m+1) \langle \mth{T}{m}, \mth{T}{1} \rangle^2 \\
      \nonumber &+ 2 \sum_{\substack{0 \leq i,j,k < m \\ i+j+k=m+2}} n_{ijk} \langle \mth{T}{i}, \mth{T}{j} \rangle \langle \mth{T}{k}, \mth{T}{m} \rangle.
  \end{align*}
  By the induction hypothesis, we have
  \begin{equation*}
    \abs{\mth{T}{i}}^2 \leq \frac{C_i K_0}{t^{i-1}}
  \end{equation*}
  for any $i = 1, \dots, m-1$. Thus
  \begin{align*}
    \abs{\mth{T}{m}}^2_t - \abs{\mth{T}{m}}^2_{ss} \leq &~ A_1 K_0 \abs{\mth{T}{m}}^2 + A_2 \sqrt{\frac{K_0}{t}} \abs{\mth{T}{m}}^2 \\
    \nonumber &+ 2 \sum_{\substack{0 \leq i,j,k < m \\ i+j+k=m+2}} n_{ijk} \abs{\mth{T}{i}} \abs{\mth{T}{j}} \abs{\mth{T}{k}} \abs{\mth{T}{m}} \\
    \leq &~ A_3 K_0 \abs{\mth{T}{m}}^2 + A_4 K_0^3 t^{-(m-2)} + \frac{A_5}{t} \abs{\mth{T}{m}}^2,
  \end{align*}
  using the Peter--Paul inequality with $\epsilon=t$, where the constants $A_i$ depend on $m$ and $C_1, \dots, C_{m-1}$.
  Therefore,
  \begin{align*}
    \left( t^{m-1} \abs{\mth{T}{m}}^2 \right)_t - \left( t^{m-1} \abs{\mth{T}{m}}^2 \right)_{ss} \leq &~ (m-1) t^{m-2} \abs{\mth{T}{m}}^2 \\
    \nonumber &+ A_3 K_0 t^{m-1} \abs{\mth{T}{m}}^2 \\
    \nonumber &+ A_4 K_0^3 t + A_5 t^{m-2} \abs{\mth{T}{m}}^2.
  \end{align*}
  For a large enough constant $C>0$, we set $\Phi_m = t^{m-1} \abs{\mth{T}{m}}^2 + C t^{m-2} \abs{\mth{T}{m-1}}^2$ and obtain
  \begin{align*}
    (\Phi_m)_t - (\Phi_m)_{ss} &\leq t^{m-2} (A_3 K_0 t + A_6 - 2 C) \abs{\mth{T}{m}}^2 + A_7 K_0^2 \\
    &\leq A_7 K_0^2.
  \end{align*}
  We then proceed as in the $m=2$ case to obtain $C_m$.
\end{proof}

Using these estimates and the short-time existence, we have long-time existence in the sense that as long as the curvature stays bounded, the flow can be continued for some time. In particular, the torsions do not play a role.
\begin{theorem}[cf. {\cite[Thm.~1.13]{altschuler1992shortening}}]
  \label{thm:LongTimeExistence}
  Assume that the curvature $\kappa$ is bounded on the time interval $[0, t_0)$. Then there exists $\epsilon > 0$ such that the curve shortening flow $\{\gamma_t\}$ exists and is smooth on the interval $[0, t_0+\epsilon)$.
\end{theorem}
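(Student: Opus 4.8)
The plan is to upgrade the scaling-invariant bounds of \cref{thm:CurvatureEstimates} into bounds that are \emph{uniform} as $t \uparrow t_0$, deduce from them that $\gamma_t$ converges in $C^\infty$ to a smooth immersion $\gamma_{t_0}$, and then restart the flow from $\gamma_{t_0}$ by means of the short-time existence theorem.

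Set $K := \sup_{S^1 \times [0,t_0)} \kappa^2 < \infty$. First I would promote the estimates of \cref{thm:CurvatureEstimates}, which are local in time and carry the singular factor $t^{-(m-1)}$, to genuinely uniform ones by translating the time origin. Given $t \in (0,t_0)$, apply \cref{thm:CurvatureEstimates} to the flow restarted at time $t_1 := \max\{0,\, t - \tfrac{1}{16K}\}$: since $\sup\kappa^2(\cdot,t_1) \le K$, the elapsed time $t - t_1 = \min\{t, \tfrac{1}{16K}\}$ satisfies the hypothesis $t - t_1 \le \tfrac{1}{8\sup\kappa^2(\cdot,t_1)}$, and the resulting bound $\abs{\pdn{T}{s}{m}}^2(\cdot,t) \le C_m \sup\kappa^2(\cdot,t_1)\, (t-t_1)^{-(m-1)}$ is bounded uniformly for $t$ bounded away from $0$; combined with the smoothness of the flow on the compact set $[0, \tfrac{1}{16K}] \cap [0,t_0)$ this yields a uniform bound $\abs{\pdn{T}{s}{m}} \le C_m'$ on all of $S^1 \times [0,t_0)$, for every $m \ge 1$. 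Since $\pdn{\gamma}{s}{m+1} = \pdn{T}{s}{m}$ and $\kappa = \abs{T_s}$, all arclength derivatives of $\gamma$ are thereby bounded uniformly up to time $t_0$ --- and, as emphasised just before the statement, the torsions never enter.

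Next I would convert this into uniform control of the ordinary $p$- and $t$-derivatives. From $\pd{}{t}v = -\kappa^2 v$ we obtain $v(p,t) = v(p,0)\exp(-\int_0^t \kappa^2(p,t')\dx{t'})$, hence $v(p,0)\e^{-Kt_0} \le v(p,t) \le v(p,0)$; since $\gamma_0$ is an immersion, $v$ is bounded above and below away from zero on $S^1 \times [0,t_0)$, so each $\gamma_t$ remains an immersion and $\pd{}{s} = \tfrac{1}{v}\pd{}{p}$ is uniformly comparable to $\pd{}{p}$. Using $\pd{}{s}(\kappa^2) = 2\langle T_{ss}, T_s\rangle$ and iterating, each $\pdn{}{p}{j}(\kappa^2)$ is controlled by arclength derivatives of $T$ and by $p$-derivatives of $v$ of order $< j$; feeding this back into the formula for $v$ and bootstrapping on the order of differentiation shows $v \in C^\infty(S^1 \times [0,t_0))$ with all derivatives bounded uniformly up to $t_0$. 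Differentiating $\pd{\gamma}{p} = vT$ then bounds all $\pdn{\gamma}{p}{m}$, and since $\pd{\gamma}{t} = \kappa N = \pdd{\gamma}{s}$ one obtains uniform bounds on all mixed space–time derivatives. In particular $\pdn{\gamma}{p}{m}(p,\cdot)$ has bounded $t$-derivative, hence is uniformly Cauchy as $t \uparrow t_0$; so $\gamma_t \to \gamma_{t_0} := \lim_{t \uparrow t_0}\gamma_t$ in $C^\infty(S^1)$, and $\gamma_{t_0}$ is a smooth immersion because $v(\cdot,t_0) \ge v(\cdot,0)\e^{-Kt_0} > 0$.

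Finally, by the short-time existence theorem \cite{gage1986heat} there exist $\epsilon > 0$ and a smooth solution of \labelcref{eq:CSF} on $[t_0, t_0+\epsilon)$ with initial datum $\gamma_{t_0}$. Concatenating it with $\{\gamma_t\}_{t\in[0,t_0]}$ produces a family on $S^1 \times [0,t_0+\epsilon)$ which is $C^\infty$ from either side of $t = t_0$ and solves \labelcref{eq:CSF} on $[0,t_0) \cup (t_0, t_0+\epsilon)$; since in the parametrisation $p$ the flow reads $\pd{\gamma}{t} = \pdd{\gamma}{s}$, a system that is weakly parabolic on the uniform range of $v$ established above, interior parabolic regularity together with uniqueness of the flow makes it smooth across $t = t_0$, which is the claim. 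I expect the only genuinely delicate points to be (i) the time-translation argument that removes the singular factor $t^{-(m-1)}$ and renders the derivative estimates uniform up to $t_0$, and (ii) the bootstrap from arclength-derivative bounds to honest $C^\infty$ bounds in $(p,t)$ that is needed to extract the smooth limit curve $\gamma_{t_0}$; the remaining steps are routine.
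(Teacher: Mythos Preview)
The paper does not actually supply a proof of \cref{thm:LongTimeExistence}; it only records the statement with a reference to \cite{altschuler1992shortening} and the remark that it follows ``using these estimates and the short-time existence''. Your proposal is a correct and standard execution of precisely that strategy: time-translate \cref{thm:CurvatureEstimates} to kill the $t^{-(m-1)}$ factor, bootstrap to uniform $C^\infty$ control in $(p,t)$, extract a smooth immersed limit $\gamma_{t_0}$, and restart via short-time existence---so there is nothing to contrast, and your write-up in fact supplies what the paper omits.

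One cosmetic point: when you invoke ``smoothness of the flow on the compact set $[0,\tfrac{1}{16K}] \cap [0,t_0)$'' to cover small times, note that this interval need not be compact if $t_0 \le \tfrac{1}{16K}$; it is cleaner to fix any $\delta \in (0,t_0)$, use smoothness on the compact set $S^1 \times [0,\delta]$, and apply the time-translation argument for $t \in [\delta,t_0)$ (where $t - t_1 \ge \min\{\delta,\tfrac{1}{16K}\}$). This is a phrasing issue only and does not affect the argument.
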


Equivalently, we may say that if $\omega $ is the maximal time of existence of the flow, the curvature must tend to infinity as $t$ approaches $\omega $.
\begin{corollary}
  Suppose that $\gamma: S^1 \times [0, \omega ) \to \R^n$ is a solution of \labelcref{eq:CSF} with initial data $\gamma(\cdot, 0) = \gamma_0$. Then $\omega $ is finite, and furthermore, $\max_{\gamma_t} \kappa^2 \to \infty$ as $t \to \omega $.
\end{corollary}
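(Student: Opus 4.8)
\section*{Proof proposal}

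The plan is to prove the two assertions separately. Finiteness of $\omega $ is immediate from \cref{prop:EvolF} and the parabolic maximum principle, while the blow-up of the curvature is obtained by contradiction: if $\max_{\gamma_t}\kappa^2$ stayed bounded along some sequence $t_j \nearrow \omega $, one re-centers the scaling-invariant estimate of \cref{thm:CurvatureEstimates} at the times $t_j$ to bound $\kappa$ on all of $[0,\omega )$, and then the continuation criterion of \cref{thm:LongTimeExistence} contradicts the maximality of $\omega $.

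For the first part, set $f(t) := \max_{p \in S^1} \abs{\gamma(p,t)}^2$. By \cref{prop:EvolF} we have $\pd{\abs{\gamma}^2}{t} = \pdd{\abs{\gamma}^2}{s} - 2$, and since $S^1$ is closed, at any point where the spatial maximum is attained $\pdd{\abs{\gamma}^2}{s} \leq 0$; hence $f$ is locally Lipschitz with $f'(t) \leq -2$ for almost every $t$, so $f(t) \leq \max\abs{\gamma_0}^2 - 2t$. As $f \geq 0$, this forces $t \leq \tfrac{1}{2} \max\abs{\gamma_0}^2$ for every $t \in [0,\omega )$, and therefore $\omega \leq \tfrac{1}{2} \max\abs{\gamma_0}^2 < \infty$.

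For the second part, suppose towards a contradiction that $\max_{\gamma_t}\kappa^2$ does not tend to $\infty$ as $t \to \omega $. Then there exist $M < \infty$ and a sequence $t_j \nearrow \omega $ with $K_{t_j} := \sup\kappa^2(\cdot,t_j) \leq M$; note $K_{t_j} > 0$ since a closed immersed curve cannot have vanishing curvature everywhere. Because \labelcref{eq:CSF} is invariant under translation in time, we may apply \cref{thm:CurvatureEstimates} with $m=1$ to the flow $\tau \mapsto \gamma_{t_j + \tau}$: for $\tau \in (0, \tfrac{1}{8K_{t_j}}]$ we obtain $\kappa^2(\cdot, t_j + \tau) = \abs{T_s}^2 \leq 2 K_{t_j} \leq 2M$, while at $\tau = 0$ we have $\kappa^2 \leq K_{t_j} \leq M$ by definition of $K_{t_j}$. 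Choosing $j$ large enough that $\omega - t_j < \tfrac{1}{8M} \leq \tfrac{1}{8 K_{t_j}}$, we conclude $\kappa^2 \leq 2M$ on the whole of $[t_j, \omega )$. On the compact interval $[0, t_j]$ the curvature is bounded by smoothness of the flow, so $\kappa$ is bounded on $[0,\omega )$. But then \cref{thm:LongTimeExistence} yields $\epsilon > 0$ such that the flow exists and is smooth on $[0, \omega + \epsilon)$, contradicting the maximality of $\omega $. Hence $\max_{\gamma_t}\kappa^2 \to \infty$ as $t \to \omega $.

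The only mildly delicate point is the re-centering of \cref{thm:CurvatureEstimates} at the times $t_j$ and the bookkeeping ensuring that the resulting bound $\kappa^2 \leq 2 K_{t_j}$ covers the entire remaining interval $[t_j, \omega )$; once $\omega - t_j < \tfrac{1}{8M}$ this is guaranteed since $K_{t_j} \leq M$. Everything else is a direct application of the results already established.
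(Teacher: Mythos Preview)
Your proof is correct and follows the approach the paper implicitly takes: finiteness from \cref{prop:EvolF} via the maximum principle (the paper records the bound $\omega \leq \tfrac{1}{2}\max\abs{\gamma_0}^2$ without further comment), and curvature blow-up as the contrapositive of \cref{thm:LongTimeExistence}. The paper gives no proof of the corollary, calling it an ``equivalent'' reformulation of \cref{thm:LongTimeExistence}; strictly speaking, that contrapositive only yields $\limsup_{t\to\omega} K_t = \infty$, and your re-centering of the $m=1$ case of \cref{thm:CurvatureEstimates} at the times $t_j$ is precisely the standard doubling-time step needed to upgrade this to a genuine limit. In that sense your writeup is more complete than the paper's own account.
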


\section{Blow-up limits}
\label{sec:BlowUpLimits}

Recall that we have set $K_t = \sup \kappa^2(\cdot, t)$.
Assume that $\{\gamma_t\}$ is a curve shortening flow with a singularity forming at time $\omega $. If there exists a constant $c>0$ such that
\begin{equation*}
  K_t \leq \frac{c}{\omega -t}, \qquad t < \omega ,
\end{equation*}
we say that the singularity at $\omega $ is of type-I \cite{altschuler1991singularities}. Otherwise, that is, if
\begin{equation*}
  \limsup_{t \to T} K_t (\omega -t) = \infty,
\end{equation*}
we say that it is of type-II.

Moreover, we say that $\{(p_j,t_j)\} \subset S^1 \times [0,\omega )$ is a blow-up sequence if $t_j \to \omega $ as $j \to \infty$ and
\begin{equation*}
    \lim_{j \to \infty} \kappa^2(p_j, t_j) = \infty.
\end{equation*}
In particular, a blow-up sequence is called essential if there exists a constant $\rho > 0$ such that
\begin{equation*}
  \rho K_t \leq \kappa^2(p_j, t_j), \qquad t < t_j.
\end{equation*}

For any immersed curve $\gamma: S^1 \to \R^n$, the total absolute curvature $\int_\gamma \abs{\kappa} \dx{s}$ is a scaling-invariant quantity.
In order to show that blow-up limits of the curve shortening flow are planar, we recall Altschuler's estimate on the derivative of the total absolute curvature, which also holds for evolving curves in $\R^n$ due to the evolution equations derived in \cref{subsec:EvolEqns}.

\begin{theorem}[cf. {\cite[Thm.~5.1]{altschuler1991singularities}}, see also {\cite{yang2005curve}}]
  \label{thm:TACestimate}
  Let $\gamma: S^1 \times [0, \omega ) \to \R^n$ be a solution of \labelcref{eq:CSF}. Then the integral estimate
  \begin{equation*}
    \td{}{t} \int_\gamma \abs{\kappa} \dx{s} \leq -\int_\gamma \abs{\kappa} \tau_1^2 \dx{s}
  \end{equation*}
  holds for $t \in [0, \omega )$.
\end{theorem}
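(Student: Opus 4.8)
The plan is to compute $\td{}{t} \int_{\gamma_t} \abs{\kappa} \dx{s}$ directly, combining the evolution equation \labelcref{eq:EvolKappa} for $\kappa$ with the fact that $\pd{}{t}(\dx{s}) = -\kappa^2 \dx{s}$, which follows from the evolution $\pd{}{t} v = -\kappa^2 v$ established above. Writing $\abs{\kappa}$ rather than $\kappa$ is a minor nuisance only at the zeros of $\kappa$; away from those, $\pd{}{t}\abs{\kappa} = \operatorname{sgn}(\kappa)\,\pd{\kappa}{t}$, and I would either assume the generic situation that the zero set is discrete and handle it by a standard approximation of $\abs{\cdot}$ by $\sqrt{\kappa^2 + \epsilon}$ and letting $\epsilon \to 0$, or invoke that the computation for $\int \abs{\kappa}\dx{s}$ proceeds exactly as in Altschuler's space-curve case \cite{altschuler1991singularities}. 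With that caveat, on the region $\{\kappa > 0\}$ we get
\begin{equation*}
  \td{}{t} \int_{\gamma_t} \kappa \dx{s} = \int_{\gamma_t} \left( \pdd{\kappa}{s} + \kappa^3 - \kappa \tau_1^2 \right) \dx{s} - \int_{\gamma_t} \kappa^3 \dx{s} = \int_{\gamma_t} \pdd{\kappa}{s} \dx{s} - \int_{\gamma_t} \kappa \tau_1^2 \dx{s}.
\end{equation*}

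The key cancellation is that the reaction term $+\kappa^3$ coming from \labelcref{eq:EvolKappa} is exactly killed by the $-\kappa^3$ coming from the time-derivative of the arclength measure, leaving only the diffusion term and the manifestly non-positive torsion term $-\int \kappa \tau_1^2 \dx{s}$. For a closed curve, $\int_{\gamma_t} \pdd{\kappa}{s}\dx{s} = 0$ by the fundamental theorem of calculus, since $\gamma_t$ has no boundary and $\kappa$ is smooth where it is positive. Handling the contribution near the zeros of $\kappa$ when one works with $\abs{\kappa}$ requires a little care: approximating by $\kappa_\epsilon := \sqrt{\kappa^2 + \epsilon}$, one computes $\td{}{t}\int \kappa_\epsilon\,\dx{s}$, notes that the diffusion term integrates to zero, the reaction terms combine to $\int (\kappa^2/\kappa_\epsilon)(\kappa^3 - \kappa\tau_1^2)\dx{s} - \int \kappa_\epsilon \kappa^2 \dx{s}$, and then checks that as $\epsilon \to 0$ this converges to $-\int \abs{\kappa}\tau_1^2\,\dx{s}$ by dominated convergence, using Fatou on the good-sign terms; alternatively one simply quotes that the argument is identical to \cite[Thm.~5.1]{altschuler1991singularities}, since the evolution equations in \cref{subsec:EvolEqns} have precisely the same structure as in the space-curve case.

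The main obstacle is thus essentially bookkeeping rather than a genuine difficulty: making the differentiation of $\int \abs{\kappa}\dx{s}$ rigorous despite the non-smoothness of $\abs{\cdot}$ at $0$, and being sure that no boundary-type terms sneak in. Once the computation is organized so that the $\kappa^3$ terms cancel and the diffusion term integrates to zero, the desired inequality $\td{}{t}\int_\gamma \abs{\kappa}\dx{s} \le -\int_\gamma \abs{\kappa}\tau_1^2\dx{s}$ drops out immediately, and the higher torsions $\tau_2, \dots, \tau_{n-2}$ never enter — exactly as one expects from the fact that \labelcref{eq:EvolKappa} only involves $\kappa$ and $\tau_1$.
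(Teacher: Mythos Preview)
Your approach is essentially the same as the paper's: regularise via $\kappa_\epsilon = \sqrt{\kappa^2 + \epsilon}$, use the evolution of $\kappa^2$ together with $\pd{}{t}(\dx{s}) = -\kappa^2 \dx{s}$, and let $\epsilon \to 0$. One small correction: for $\kappa_\epsilon$ the diffusion term does \emph{not} simply integrate to zero; after integration by parts it combines with the $-(\kappa_s)^2/\kappa_\epsilon$ term from \cref{cor:EvolKSquared} to give $-\epsilon \int_\gamma \kappa_\epsilon^{-3}(\kappa_s)^2 \dx{s} \le 0$, and likewise the reaction terms yield $-\epsilon\int_\gamma \kappa^2/\kappa_\epsilon \dx{s} \le 0$, which is precisely why one obtains an \emph{inequality} $\td{}{t}\int_\gamma \kappa_\epsilon \dx{s} \le -\int_\gamma \kappa^2\tau_1^2/\kappa_\epsilon \dx{s}$ rather than an equality before passing to the limit.
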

\begin{proof}
  From \cref{cor:EvolKSquared}, we have that
  \begin{equation*}
  	\pd{\kappa^2}{t} = \pdd{\kappa^2}{s}  - 2 \left( \pd{\kappa}{s} \right)^2 + 2\kappa^4 -  2 \kappa^2 \tau_1^2.
  \end{equation*}
  As in Altschuler's proof, we then define $\kappa_\epsilon = \sqrt{\kappa^2 + \epsilon}$, where $\epsilon > 0$ is arbitrary, and obtain
  \begin{equation*}
    \td{}{t} \int_\gamma \kappa_\epsilon \dx{s} \leq - \int_\gamma \frac{1}{\kappa_\epsilon} \kappa^2 \tau_1^2 \dx{s},
  \end{equation*}
  which implies the claim.
\end{proof}

For a planar curve we obtain a more precise formula.
\begin{theorem}[Altschuler {\cite[Thm.~5.14]{altschuler1991singularities}}]
  \label{thm:TACPlanar}
  For a planar solution $\gamma$ to the curve shortening flow, we have
  \begin{equation*}
    \td{}{t} \int_\gamma \abs{\kappa} \dx{s} = - 2 \sum_{\{ p \;:\; \kappa(p, \cdot) = 0 \}} \abs{\pd{\kappa}{s}}.
  \end{equation*}
\end{theorem}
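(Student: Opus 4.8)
The plan is to combine the one-dimensional evolution equation for $\kappa^2$ from \cref{cor:EvolKSquared} (with $\tau_1 \equiv 0$ in the planar case) with a careful analysis of the behaviour of $\abs{\kappa} = \sqrt{\kappa^2}$ near the zero set of $\kappa$. For a planar curve we may assign $\kappa$ a sign (say by fixing the inward normal locally), so that $\kappa$ itself is a smooth function, but $\abs{\kappa}$ has corners exactly at the points where $\kappa = 0$. Away from these zeros, $\abs{\kappa}$ is smooth and we can differentiate under the integral sign, using \cref{eq:EvolKappa} with $\tau_1 = 0$, namely $\pd{\kappa}{t} = \pdd{\kappa}{s} + \kappa^3$, to compute $\td{}{t}\int_\gamma \abs{\kappa}\dx{s}$; the bulk terms should all cancel, leaving only boundary contributions from small neighbourhoods of the zeros of $\kappa$.

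The key steps, in order, are as follows. First I would argue that at a fixed (generic) time $t$, the function $\kappa(\cdot, t)$ has only finitely many zeros, and at each such zero $p$ one has $\pd{\kappa}{s}(p,t) \neq 0$: this follows because $\kappa$ satisfies a linear parabolic equation $\pd{\kappa}{t} - \pdd{\kappa}{s} - \kappa^2 \kappa = 0$ (treating $\kappa^2$ as a coefficient), so by the Sturmian/Angenent theory the zeros are isolated and transverse for all but a discrete set of times, and the number of zeros is non-increasing; alternatively one reduces to a short computation showing a degenerate zero would have to be non-generic. Second, for such a time, I would fix $\delta > 0$ small enough that the $\delta$-neighbourhoods of the zeros are disjoint, split $\int_\gamma \abs{\kappa}\dx{s}$ into the part over $\{|\kappa| \ge c\}$ (for appropriate $c$) where $\abs{\kappa} = \pm\kappa$ is smooth, and the part over the small neighbourhoods of the zeros. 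On the smooth part, $\td{}{t}\int \abs{\kappa}\dx{s} = \int \left(\mathrm{sgn}(\kappa)\pd{\kappa}{t} - \abs{\kappa}\kappa^2\right)\dx{s}$, where the $-\abs{\kappa}\kappa^2$ term comes from $\pd{}{t}(\dx{s}) = -\kappa^2\dx{s}$; substituting $\pd{\kappa}{t} = \pdd{\kappa}{s} + \kappa^3$ and noting $\mathrm{sgn}(\kappa)\kappa^3 = \abs{\kappa}\kappa^2$, these cancel, leaving $\int \mathrm{sgn}(\kappa)\pdd{\kappa}{s}\dx{s}$, which integrates to a sum of boundary terms $\pm\pd{\kappa}{s}$ evaluated at the edges of the excised intervals. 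Third, I would send $\delta \to 0$: near each zero $p$, since $\pd{\kappa}{s}(p) \neq 0$, $\kappa$ changes sign from one side to the other, so the two boundary terms at that zero add rather than cancel, each contributing $-\abs{\pd{\kappa}{s}(p)}$, for a total of $-2\abs{\pd{\kappa}{s}(p)}$ (the sign being pinned down by the fact that $\abs{\kappa}$ is decreasing into the zero from both sides); meanwhile the integral of $\abs{\kappa}$ over the shrinking neighbourhoods tends to zero and contributes nothing in the limit because $\abs{\kappa}$ and its relevant time derivative are bounded there. Summing over all zeros gives the claimed formula.

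The main obstacle I expect is making the limiting argument rigorous at the zeros: one must justify interchanging $\td{}{t}$ with the $\delta \to 0$ limit, i.e.\ control $\pd{}{t}\int_{|s-p|<\delta}\abs{\kappa}\dx{s}$ uniformly as $\delta \to 0$, and handle the (measure-zero in time) instants where a zero is degenerate or where two zeros merge. The clean way to do this is to first establish the identity for a.e.\ $t$ using the transversality of zeros, then upgrade to all $t$ by continuity of both sides — the left side being the derivative of a Lipschitz function of $t$ and the right side being upper semicontinuous and jumping only when zeros are created or annihilated, which by the Sturmian theory only decreases the count. Since this is precisely Altschuler's \cite[Thm.~5.14]{altschuler1991singularities} and nothing in the passage from $\R^3$ to $\R^n$ affects the planar computation, I would in fact simply cite Altschuler for the details and only indicate that the planar evolution equations used are the $\tau_1 = 0$ specialisations of \cref{eq:EvolKappa} and \cref{cor:EvolKSquared}.
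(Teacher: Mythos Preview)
The paper does not give its own proof of this theorem at all: it is stated with attribution to Altschuler \cite[Thm.~5.14]{altschuler1991singularities} and then used without further argument. Your final suggestion---to simply cite Altschuler, noting that the planar case uses only the $\tau_1=0$ specialisation of \cref{eq:EvolKappa}---is therefore exactly what the paper does, and your preceding sketch is a correct outline of Altschuler's actual argument (evolution equation plus boundary contributions at the sign changes of $\kappa$), so there is no discrepancy.
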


Given a blow-up sequence $\{(p_j,t_j)\}$, we define a blow-up procedure as follows \cite{altschuler1991singularities}: Define $\gamma_j: S^1 \times [\alpha_j, \omega_j) \to \R^n$, where $\alpha_j = -\lambda_j^2 t_j$, $\omega_j = \lambda_j^2 (\omega - t_j)$, by
\begin{equation*}
  \gamma_j(\cdot, \bar{t}) = \lambda_j (A_j \gamma(\cdot, t) + b_j), \qquad \bar{t} = \lambda_j^2(t-t_j).
\end{equation*}
Moreover, $\lambda_j > 0$, $A_j \in \SO(n)$, $b_j \in \R^n$ are such that
\begin{align*}
  \gamma_j(p_j, 0) &= 0 \in \R^n, \\
  T_j(p_j, 0) &= (1, 0, \dots, 0) = e_1, \\
  N_j(p_j, 0) &= (0, 1, 0, \dots, 0) = e_2, \\
  (B_i)_j(p_j, 0) &= (0,\dots,0,1,0,\dots,0) = e_{i+2}, \qquad i=1,\dots,n-2.
\end{align*}

In order to parametrise $\gamma_j$ by arclength, note that
\begin{equation*}
  \abs{\pd{\gamma_j}{s}} = \lambda_j \abs{A_j \pd{\gamma}{s}} = \lambda_j.
\end{equation*}
Therefore, if we let $\bar{s} = \lambda_j s$ then $\gamma_j$ is parametrised by arclength $\bar{s}$ once we define
\begin{equation*}
  \gamma_j(\bar{s}, \bar{t}) = \lambda_j (A_j \gamma(s, t) + b_j).
\end{equation*}
This then gives
\begin{equation*}
  \pd{\gamma_j}{\bar{s}} = \pd{\gamma_j}{s} \pd{s}{\bar{s}} = \lambda_j A_j \pd{\gamma}{s} \frac{1}{\lambda_j} = A_j \pd{\gamma}{s},
\end{equation*}
that is,
\begin{equation}\label{eq:Tj}
  T_j(\bar{s}, \bar{t}) = A_j T(s, t),
\end{equation}
and in the same way we obtain
\begin{align*}
  N_j(\bar{s}, \bar{t}) &= A_j N(s, t), \\
  (B_i)_j(\bar{s}, \bar{t}) &= A_j B_i(s,t), \\
  \kappa_j(\bar{s}, \bar{t}) &= \frac{1}{\lambda_j} \kappa(s, t), \\
  (\tau_i)_j(\bar{s},\bar{t}) &= \frac{1}{\lambda_j} \tau_i(s, t).
\end{align*}
Then $\gamma_j$ is indeed parametrised by arclength $\bar{s}$,
\begin{equation*}
  \abs{\pd{\gamma_j}{\bar{s}}} = \abs{A_j \pd{\gamma}{s}} = 1,
\end{equation*}
since $\gamma$ is parametrised by arclength $s$ and $\abs{A_j} = 1$.
As a result, each $\gamma_j$ is a solution of curve shortening flow,
\begin{equation*}
  \pd{\gamma_j}{\bar{t}}(\bar{s}, \bar{t}) = (\kappa_j N_j)(\bar{s}, \bar{t}).
\end{equation*}
Indeed,
\begin{align*}
  \pd{\gamma_j}{\bar{t}}(\bar{s}, \bar{t}) &= \lambda_j A_j \pd{\gamma}{t} (s, t) \pd{t}{\bar{t}} \\
  &= \frac{1}{\lambda_j} A_j (\kappa N)(s, t) \\
  &= (\kappa_j N_j)(\bar{s}, \bar{t}).
\end{align*}

For the following theorem, we can use the exact same proof as Altschuler \cite{altschuler1991singularities}, which remains valid in view of the evolution equations of \cref{subsec:EvolEqns} and the scaling invariant estimates of \cref{thm:CurvatureEstimates}. We repeat it here for convenience.

\begin{theorem}[cf. {\cite[Thm.~7.3]{altschuler1991singularities}}]
  \label{thm:BlowUpLimit}
  Let $\gamma: S^1 \times [0, \omega) \to \R^n$ be a solution of \labelcref{eq:CSF}.
  Assume that $\{(p_j, t_j)\}$ is an essential blow-up sequence. Then there exists a subsequence of $\{(p_j, t_j)\}$ along which the rescaled solutions $\gamma_j$ converge to a smooth nontrivial limit $\gamma_\infty$ which exists at least on the time interval $[-\infty, 0]$.
\end{theorem}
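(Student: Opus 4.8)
The plan is to obtain $\gamma_\infty$ by a compactness argument: first I would establish bounds on the rescaled flows $\gamma_j$ together with all of their space-time derivatives that are uniform in $j$ over bounded space-time regions, and then apply the Arzelà--Ascoli theorem together with a diagonal procedure. The only input going beyond routine estimates is the \emph{essential} blow-up hypothesis, which is precisely what turns the scaling-invariant estimates of \cref{thm:CurvatureEstimates} into $j$-independent bounds. To begin, recall that $\lambda_j = \kappa(p_j, t_j) \to \infty$, so that $\alpha_j = -\lambda_j^2 t_j \to -\infty$ (using $t_j \to \omega > 0$) while $\omega_j > 0$; hence for $j$ large each $\gamma_j$ is a curve shortening flow on an interval containing $[\alpha_j, 0]$. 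The normalisation gives $\gamma_j(p_j, 0) = 0$, $T_j(p_j, 0) = e_1$ and $\kappa_j(p_j, 0) = 1$, and the essential condition $\rho K_t \le \kappa^2(p_j, t_j) = \lambda_j^2$ for $t < t_j$ rescales to
\begin{equation*}
  \sup_{S^1} \kappa_j^2(\cdot, \bar t) \le \rho^{-1} \qquad \text{for all } \bar t \in [\alpha_j, 0].
\end{equation*}

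Next I would derive the higher-order bounds. Fix $\bar t \le 0$ and take $j$ large enough that $\bar t - \rho/16 \ge \alpha_j$. Applying \cref{thm:CurvatureEstimates} (after a time shift) to $\gamma_j$ with initial time $\bar t - \rho/16$, for which $\sup_{S^1} \kappa_j^2$ at the initial time is at most $\rho^{-1}$ so that the estimate is valid on a time interval of length at least $\rho/8 > \rho/16$, gives for every $m \ge 1$ a bound
\begin{equation*}
  \abs{\pdn{T_j}{\bar s}{m}}^2(\cdot, \bar t) \le C_m \, 16^{m-1} \rho^{-m}
\end{equation*}
with right-hand side independent of $j$ and of $\bar t \le 0$, where $\bar s$ denotes arclength at time $\bar t$. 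Now parametrise each $\gamma_j$ by the parameter that is arclength at $\bar t = 0$, with parameter value $0$ at $p_j$; then the speed $v_j$ satisfies $\pd{v_j}{\bar t} = - \kappa_j^2 v_j$, so on $[a, 0]$ it stays in $[1, \e^{\abs{a}/\rho}]$. Combining this speed bound with the derivative estimate above, with $\abs{\pdd{\gamma_j}{\bar s}} = \kappa_j \le \rho^{-1/2}$, with the identity $\pd{\gamma_j}{\bar t} = \pdd{\gamma_j}{\bar s}$, and with the commutation formula \labelcref{eq:CommutationFormula}, an induction bounds every space-time derivative of $\gamma_j$ uniformly in $j$. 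Since $\pd{\gamma_j}{\bar t} = \kappa_j N_j$ has length at most $\rho^{-1/2}$, the point $\gamma_j(p_j, \bar t)$ stays in the ball of radius $\rho^{-1/2} \abs{\bar t}$, so for each $R > 0$ and $a < 0$ the restrictions of $\gamma_j$ to $[-R, R] \times [a, 0]$ take values in a fixed compact set and are bounded in $C^k$ for every $k$.

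By the Arzelà--Ascoli theorem, after passing to a subsequence $\gamma_j$ converges in $C^\infty$ on $[-R, R] \times [a, 0]$; a diagonal argument over $R \to \infty$ and $a \to -\infty$ then produces a subsequence along which $\gamma_j \to \gamma_\infty$ in $C^\infty_{\loc}$, where $\gamma_\infty$ is defined on $J \times (-\infty, 0]$ with $J = \R$ if the rescaled total lengths diverge and $J = S^1$ otherwise. Because each $\gamma_j$ solves \labelcref{eq:CSF} and the equation involves only finitely many derivatives, $\gamma_\infty$ is a smooth curve shortening flow; since $\alpha_j \to -\infty$ it exists on all of $[-\infty, 0]$; and it is nontrivial because $\kappa_\infty(0, 0) = \lim_j \kappa_j(p_j, 0) = 1 \neq 0$.

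There is no deep obstacle here, but the point needing the most care is the uniform higher-order estimate. \cref{thm:CurvatureEstimates} controls $\pdn{T_j}{\bar s}{m}$ only on a short interval immediately following the chosen initial time, with constants that degenerate as that interval shrinks, so one must restart the flow at the sliding time $\bar t - \rho/16$ and verify that the essential blow-up bound keeps both $\sup_{S^1} \kappa_j^2$ at the restart time and the length of the validity interval under control uniformly in $j$ --- this is the only place where the hypothesis ``essential'' is used. The remaining ingredients, namely the choice of parameter, the control of the basepoint trajectory, and the Arzelà--Ascoli extraction, are routine, the one mild subtlety being that the spatial domain of the limit may be either the line or a circle.
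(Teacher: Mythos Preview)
Your argument is correct and follows the same overall strategy as the paper's proof: use the essential hypothesis to obtain a uniform curvature bound $\kappa_j^2 \le \rho^{-1}$ on $(-\infty,0]$, feed this into \cref{thm:CurvatureEstimates} to control all higher spatial derivatives of $T_j$, promote these to full space--time bounds, and extract a limit via Arzelà--Ascoli and diagonalisation. The executions differ in two technical respects. To cope with the time dependence of arclength, the paper introduces a modified time operator $\tfrac{\delta}{\delta\bar t} = \tfrac{\partial}{\partial\bar t} + \phi_j \tfrac{\partial}{\partial\bar s}$ (with $\tfrac{\partial\phi_j}{\partial\bar s} = \kappa_j^2$) that commutes with $\tfrac{\partial}{\partial\bar s}$ and hence preserves the derivative bounds under iteration; you instead freeze the parametrisation at $\bar t = 0$ and control the speed via $\tfrac{\partial v_j}{\partial\bar t} = -\kappa_j^2 v_j$, which is more elementary but requires tracking the $v_j$ factors when converting arclength derivatives to fixed-parameter derivatives. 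Secondly, the paper extracts a limit of the tangent maps $T_j$ and integrates to recover $\gamma_\infty$, whereas you take the limit of $\gamma_j$ directly after basepoint control --- this is purely cosmetic. Your explicit device of restarting \cref{thm:CurvatureEstimates} at the sliding time $\bar t - \rho/16$ is in fact clearer than the paper's somewhat compressed invocation of that theorem.
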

\begin{proof}
  Set $\lambda_j := \kappa(p_j, t_j)$, so that $\kappa_j^2(p_j, 0) = 1$ (note that $\bar{t} = 0 \Leftrightarrow t = t_j$).

  Since $\{(p_j, t_j)\}$ is a blow-up sequence, we have that $\lim_{j \to \infty} \alpha_j = - \infty$.
  If a type-I singularity occurs (that is, $\lim_{t \to \omega} K_t (\omega - t) < \infty$), then $\lim_{j \to \infty} \omega_j < \infty$, for a type-II singularity ($\lim_{t \to \omega} K_t (\omega - t) = \infty$), we can choose an essential blow-up sequence such that $\lim_{j \to \infty} \omega_j = \infty$, since from the definition of $\lambda_j$ we have that $\lambda_j^2 \leq K_{t_j}$.

  As a limit solution might be a family of noncompact curves, we consider the solutions $\gamma_j$ instead as a family of curves $\tilde{\gamma}_j: \R \times [\alpha_j, \omega_j) \to \R^n$, periodic in space, such that $\tilde{\gamma}_j(0, \cdot) = \gamma_j(p_j, \cdot)$.
  Denote the arclength parameter for $\tilde{\gamma}_j(\cdot, \bar{t})$ from the origin $0 \in \R$ by $\bar{s}$ and recall that, in general, $\bar{s}$ depends on $\bar{t}$ in the sense that $\pd{\bar{s}}{\bar{t}} \neq 0$.

  Define the differential operator
  \begin{equation*}
    \dd{}{\bar{t}} = \pd{}{\bar{t}} + \phi_j(\bar{s}) \pd{}{\bar{s}},
  \end{equation*}
  where $\phi_j(\bar{s}) = \int_0^{\bar{s}} \kappa_j^2(\sigma,\bar{t}) \dx{\sigma}$ and thus $\pd{\phi_j}{\bar{s}} = \kappa_j^2$. Then,
  \begin{align*}
    \left[ \dd{}{\bar{t}}, \pd{}{\bar{s}} \right] &= \pd{}{\bar{t}} \pd{}{\bar{s}} + \phi_j \pd{}{\bar{s}} \pd{}{\bar{s}} - \pd{}{\bar{s}} \pd{}{\bar{t}} - \pd{\phi_j}{\bar{s}} \pd{}{\bar{s}} - \phi_j \pd{}{\bar{s}} \pd{}{\bar{s}} \\
    &= \pd{}{\bar{t}} \pd{}{\bar{s}} - \pd{}{\bar{s}} \pd{}{\bar{t}} - \kappa_j^2 \pd{}{\bar{s}} \\
    &= 0.
  \end{align*}
  Therefore, denoting $v = \abs{\pd{\gamma}{p}}$ and $\bar{s} = \int_{p_0}^p v \dx{q}$,
  \begin{align*}
    \dd{\bar{s}}{\bar{t}} &= \pd{\bar{s}}{\bar{t}} + \phi_j \\
    &= - \int_{p_0}^p \kappa_j^2 v \dx{q} + \int_0^{\bar{s}} \kappa_j^2 \dx{\sigma} \\
    &= 0,
  \end{align*}
  where we have used that $\pd{v}{\bar{t}} = - \kappa_j^2 v$ and $\dx{\bar{s}} = v \dx{p}$.

  Since $\{(p_j, t_j)\}$ is an essential blow-up sequence, there exists $\rho > 0$ independent of $j$ such that $\rho K_t \leq \kappa^2(p_j, t_j)$ whenever $t \leq t_j$. In particular, for the curves $\tilde{\gamma}_j$, $\rho \sup \kappa_j^2(\cdot, \bar{t}) \leq \kappa_j^2(p_j, 0) = 1$ for $\bar{t} \leq 0$.

  Then differentiating \cref{eq:Tj} with respect to $\bar{s}$ we get
  \begin{equation*}
    \pd{T_j}{\bar{s}} = A_j \pd{T}{s} \pd{s}{\bar{s}} = \frac{1}{\lambda_j} A_j \pd{T}{s}.
  \end{equation*}
  Hence, by \cref{thm:CurvatureEstimates}, we have that
  \begin{equation*}
    \abs{\pd{T_j}{\bar{s}}}^2 = \frac{1}{\lambda_j^2} \abs{\pd{T}{s}}^2 \leq \frac{\tilde{c}_1 K_{t_j}}{\lambda_j^2} \leq c_1,
  \end{equation*}
  since $\{(p_j, t_j)\}$ is an essential blow-up sequence and we have $\frac{K_{t_j}}{\lambda_j^2} \leq \frac{1}{\rho} < \infty$.

  Taking further derivatives with respect to $\bar{s}$, we obtain
  \begin{equation*}
    \pdn{T_j}{\bar{s}}{\ell} = \frac{1}{\lambda_j^\ell} A_j \pdn{T}{s}{\ell},
  \end{equation*}
  whereby
  \begin{equation*}
    \abs{\pdn{T_j}{\bar{s}}{\ell}}^2 = \frac{1}{\lambda_j^{2 \ell}} \abs{\pdn{T}{s}{\ell}}^2 \leq c_\ell.
  \end{equation*}

  Using the commutation formula for $\pd{}{\bar{t}}$ and $\pd{}{\bar{s}}$, we get
  \begin{equation*}
    \abs{\pd{T_j}{\bar{t}}}^2 \leq \abs{\pdd{T_j}{\bar{s}}}^2 + \abs{\pd{T_j}{\bar{s}}}^4 \leq c_2 + c_1^2.
  \end{equation*}
  Then, taking derivatives with respect to $\bar{t}$ and using the commutation formula, we see that $\abs{\pdn{T_j}{\bar{t}}{\ell}}^2$ is bounded by a sum of products of $\abs{\pdn{T_j}{\bar{t}}{k}}^2$ for $k < \ell$ and $\abs{\pdn{T_j}{\bar{s}}{m}}^2$ for $m \leq 2 \ell$ and is thus itself bounded.

  By the definition of $\dd{}{\bar{t}}$, therefore, the fact that $\dd{}{\bar{t}}$ and $\pd{}{\bar{s}}$ commute, and the estimate $\phi_j(\bar{s}) \leq \rho^{-1} \bar{s}$ we conclude
  that $\abs{\ddn{T_j}{\bar{t}}{\ell}}^2$ is bounded for any $\ell$ independently of $j$ on compact subsets of $\R \times [-\infty, \omega_\infty)$, %
  and, again, since $\dd{}{\bar{t}}$ and $\pd{}{\bar{s}}$ commute, the same is true for all mixed derivatives $\abs{\frac{\delta^j \partial^k T_j}{\delta^j \bar{t} \; \partial^k \bar{s}}}^2$. %

  By the Arzelà--Ascoli theorem, there exists a subsequence of $\{(p_j, t_j)\}$, denoted the same, along which the tangent vectors $T_j(\bar{s}, \bar{t})$ converge uniformly on compact sets of $\R \times [-\infty, \omega_\infty)$ to a smooth limit $T_\infty(\bar{s}, \bar{t})$ as $j \to \infty$. We may thus define a smooth limit solution $\tilde{\gamma}_\infty$ by integrating $T_\infty$.
  If $\tilde{\gamma}_\infty$ is periodic, we denote by $\gamma_\infty$ one period of $\tilde{\gamma}_\infty$, if not, we set $\gamma_\infty = \tilde{\gamma}_\infty$.

  Finally, $\gamma_\infty$ cannot be trivial, i.\,e., a straight line, for
  \begin{equation*}
    \kappa_\infty^2(0, 0) = \lim_{j \to \infty} \tilde{\kappa}_j^2(0, 0) = \lim_{j \to \infty} \kappa_j^2(p_j, 0) = 1,
  \end{equation*}
  finishing the proof.
\end{proof}

The fact that blow-up limits of evolving space curves are planar goes back to Altschuler's work \cite{altschuler1991singularities}. For curves in any codimension this fact was pointed out by Yang and Jiao \cite{yang2005curve}. Using \cref{thm:TACestimate} and the monotonicity formula, we can give a simple proof also in the general case.%

\begin{theorem}[cf. {\cites[Thm.~7.7]{altschuler1991singularities}}]
  \label{thm:BlowUpIsPlanar}
  Let $\gamma: S^1 \times [0, \omega ) \to \R^n$ be a solution of \labelcref{eq:CSF}. Then any nontrivial blow-up limit of $\gamma$ is planar and convex.
\end{theorem}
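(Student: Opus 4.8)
The plan is to combine the scaling-invariant control of the blow-up limit from \cref{thm:BlowUpLimit} with the monotonicity of the total absolute curvature from \cref{thm:TACestimate}. First I would observe that the total absolute curvature $\int_\gamma \abs{\kappa}\,\dx{s}$ is scaling-invariant, so along any blow-up sequence $\int_{\gamma_j(\cdot,\bar t)} \abs{\kappa_j}\,\dx{\bar s} = \int_{\gamma(\cdot, t)} \abs{\kappa}\,\dx{s}$; by \cref{thm:TACestimate} the right-hand side is non-increasing in $t$ and bounded below by $0$, hence converges as $t \to \omega$. Consequently $\int_0^\omega \int_{\gamma_t} \abs{\kappa}\tau_1^2\,\dx{s}\,\dx{t} < \infty$. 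Rescaling this statement and passing to the limit solution $\gamma_\infty$ obtained in \cref{thm:BlowUpLimit}, I would argue that the limiting total absolute curvature is constant in $\bar t$, and therefore that $\int_{\gamma_\infty(\cdot,\bar t)} \abs{\kappa_\infty}\tau_{1,\infty}^2\,\dx{\bar s} = 0$ for every $\bar t$, so $\tau_{1,\infty} \equiv 0$ wherever $\kappa_\infty > 0$.

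Next I would invoke Spivak's theorem (\cref{thm:SpivakThm}) with $j = 3$: on any subinterval where $\kappa_\infty$ does not vanish and $\tau_{1,\infty}$ vanishes identically, the curve lies in a $2$-plane. One must still handle the set where $\kappa_\infty = 0$; here I would use a continuity/connectedness argument — the $2$-plane containing a maximal arc on which $\kappa_\infty > 0$ cannot jump, because $T_\infty$ and its derivatives are continuous and, near an isolated zero of $\kappa_\infty$, the osculating plane extends by continuity (and if $\kappa_\infty \equiv 0$ on an open set the curve is a line segment there, which lies in any plane through its endpoints). Thus $\gamma_\infty(\cdot,\bar t)$ lies in a single $2$-plane $P_{\bar t} \subset \R^n$; a further continuity argument in $\bar t$, using smooth convergence of the flow, shows $P_{\bar t}$ is independent of $\bar t$, so the whole limit flow is planar.

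Finally, convexity: once $\gamma_\infty$ is a planar curve shortening flow, I would apply the Gage--Hamilton--Grayson machinery together with the type-I/type-II dichotomy, or more directly the sharpened planar identity \cref{thm:TACPlanar}, which says $\td{}{t}\int_{\gamma_\infty}\abs{\kappa_\infty}\,\dx{\bar s} = -2\sum_{\kappa_\infty = 0}\abs{\pd{\kappa_\infty}{\bar s}}$. Since we have already shown the left side vanishes, the flow $\gamma_\infty$ has no points with $\kappa_\infty = 0$ and $\pd{\kappa_\infty}{\bar s} \neq 0$ for any $\bar t$; a short argument (the zero set of $\kappa_\infty$ cannot persist and, being the nodal set of a solution of a parabolic equation, must be discrete) then forces $\kappa_\infty$ to have a sign, i.e.\ $\gamma_\infty(\cdot,\bar t)$ is convex for each $\bar t$.

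The main obstacle I expect is the passage to the limit in the integral $\int\int \abs{\kappa}\tau_1^2$: justifying that the limiting total absolute curvature is genuinely constant in $\bar t$ (not merely non-increasing) requires care, since the time intervals $[\alpha_j,\omega_j)$ change with $j$ and the convergence in \cref{thm:BlowUpLimit} is only on compact subsets. One has to exploit that the pre-limit quantity $\int_{\gamma(\cdot,t)}\abs{\kappa}\,\dx{s}$ has a limit as $t \to \omega$ and that the rescaling concentrates times near $\omega$, so any two times $\bar t_1 < \bar t_2$ in the limit correspond to sequences $t_j^{(1)} < t_j^{(2)}$ both tending to $\omega$, between which the total absolute curvature varies by $o(1)$.
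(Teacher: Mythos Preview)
Your approach is essentially correct and parallels the paper's, with one structural difference and one point that needs sharpening.

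\textbf{Structural difference.} To conclude that $\int_{\gamma_\infty}\abs{\kappa_\infty}\tau_{1,\infty}^2\,\dx{\bar s}=0$, the paper does not argue via convergence of the total absolute curvature as $t\to\omega$. Instead it invokes Huisken's monotonicity formula to assert that any blow-up limit is self-similar; since the total absolute curvature is scale- and translation-invariant, it is then automatically constant along the limit flow, and \cref{thm:TACestimate} applied directly to $\gamma_\infty$ forces the integral to vanish. Your route---using that $\int_{\gamma_t}\abs{\kappa}\,\dx{s}$ has a limit as $t\to\omega$, that $\int\!\!\int\abs{\kappa}\tau_1^2\,\dx{s}\,\dx{t}$ is scaling-invariant, and then passing to the limit on compact sets---is a valid alternative that avoids the monotonicity formula. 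The care you flag in your last paragraph is exactly what is needed, and Fatou on compact sets suffices.

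\textbf{Convexity.} Here your argument is incomplete. From the planar identity \cref{thm:TACPlanar} you correctly deduce that every zero of $\kappa_\infty$ is degenerate, i.e.\ $\kappa_\infty=\pd{\kappa_\infty}{\bar s}=0$ there. But ``the nodal set is discrete'' and ``cannot persist'' does not by itself exclude such degenerate zeros---indeed, the standard Sturmian statement only controls nondegenerate zeros. The paper closes this gap by invoking Angenent's unique-continuation result for parabolic equations \cite{angenent1991parabolic}: a solution of the curvature evolution equation that has a point where $\kappa=\kappa_s=0$ must vanish identically, so the curve would be a straight line, contradicting nontriviality. You should replace your nodal-set sketch with this citation. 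Once $\kappa_\infty>0$ everywhere, the application of \cref{thm:SpivakThm} is clean and your continuity patching of planes across zeros of $\kappa_\infty$ becomes unnecessary.
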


\begin{proof}
  By Huisken's monotonicity formula, any blow-up limit $\gamma_\infty$ of curve shortening flow is self-similar. Moreover, the total absolute curvature is scaling-invariant. Therefore, \cref{thm:TACestimate} implies that
  \begin{equation*}
    0 \leq -\int_{\gamma_\infty} \abs{\kappa} \tau_1^2 \dx{s}\!.
  \end{equation*}
  This implies that, at almost every point of the smooth limit curve $\gamma_\infty$, we must have either $\kappa=0$ or $\tau_1=0$. Then \cref{thm:SpivakThm} implies that $\gamma_\infty$ must be contained in a $2$-dimensional subspace of $\R^n$.
  Note that $\gamma_\infty$ cannot have any inflection points, since by \cref{thm:TACPlanar}, any inflection point must be degenerate, that is, $\kappa = \pd{\kappa}{s} = 0$, but a result of Angenent \cite{angenent1991parabolic} implies that any solution with degenerate inflection points must be a line.
\end{proof}

\subsection{Type-I singularities}

In order to analyse the behaviour of type-I singularities, we can employ Huisken's argument for singularities of mean curvature flow \cite{huisken1990asymptotic}, which was also used by Altschuler \cite{altschuler1991singularities} to prove the corresponding result for curve shortening flow of space curves.

To that end, let $\gamma: S^1 \times [0, \omega ) \to \R^n$ be a solution of \labelcref{eq:CSF} and assume that $(0,\omega ) \in \R^n \times \R$ is a singular point of type-I reached by the flow. We then define a continuous rescaling of the flow via
\begin{equation*}
  \tilde{\gamma}(s, \tilde{t}) = \frac{1}{\sqrt{2(\omega - t)}} \gamma(s,t),
\end{equation*}
where $\tilde{t} = - \frac{1}{2} \log(\omega -t)$.
The rescaled flow $\{\tilde \gamma_t\}$ is thus defined for $- \frac{1}{2} \log \omega  \leq \tilde t < \infty$, and
in terms of the differential operators
\begin{align*}
  \pd{}{\tilde t} &= 2(\omega -t) \pd{}{t}, \\
  \pd{}{\tilde s} &= \sqrt{2(\omega -t)} \pd{}{s},
\end{align*}
it satisfies
\begin{equation*}
  \pd{}{\tilde t} \tilde \gamma = \pdd{}{\tilde s} \tilde \gamma + \tilde \gamma.
\end{equation*}
The reason to choose this particular rescaling is that the type-I assumption then implies that the curvature of the rescaled flow is uniformly bounded for all time, since
\begin{equation*}
  \tilde \kappa(s, \tilde t) = \sqrt{2(\omega -t)} \kappa(s, t).
\end{equation*}

In the rescaled setting, we then have the monotonicity formula \cite{huisken1990asymptotic} %
\begin{equation}\label{eq:RescaledMonotonicityFormula}
  \td{}{\tilde t} \int_{\tilde \gamma} \tilde k \dx{\tilde s} = - \int_{\tilde \gamma} \abs{\pd{\tilde \gamma}{\tilde s} + \tilde \gamma^\perp}^2 \tilde k \dx{\tilde s},
\end{equation}
where the rescaled backwards heat kernel on $\R^n$ is given by
\begin{equation*}
  \tilde k(x, \tilde t) = \e^{- \abs{x}^2}.
\end{equation*}

We can then perform the blow-up procedure as in the proof of \cref{thm:BlowUpLimit}, noting that the type-I assumption also implies that any blow-up sequence is necessarily essential, to obtain a subsequential limit. By the rescaled monotonicity formula \labelcref{eq:RescaledMonotonicityFormula}, we conclude that the limit is self-similarly shrinking \cite{huisken1990asymptotic}, and by \cref{thm:BlowUpIsPlanar} the limit is planar. Moreover, by continuity of the total absolute curvature, the winding number cannot change. We thus have

\begin{theorem}[cf. {\cite[Thm.~8.15]{altschuler1991singularities}}]
  \label{thm:ConvergenceTypeI}
  Suppose that a type-I singularity is forming at time $\omega $.
  Let $\{(p_j,t_j)\}$ be a blow-up sequence. Then there exists a subsequence of $\{(p_j,t_j)\}$ such that a rescaling of $\gamma$ along it converges to a planar self-similarly shrinking solution $\gamma_\infty$ with the same winding number.
\end{theorem}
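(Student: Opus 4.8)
The plan is to follow Altschuler's adaptation of Huisken's type-I analysis, using the continuously rescaled flow together with the rescaled monotonicity formula \labelcref{eq:RescaledMonotonicityFormula}, the blow-up extraction behind \cref{thm:BlowUpLimit}, the planarity of \cref{thm:BlowUpIsPlanar}, and the total absolute curvature estimate \cref{thm:TACestimate}. First I would pass to the continuous rescaling $\tilde\gamma(s,\tilde t) = (2(\omega - t))^{-\frac12}\gamma(s,t)$, $\tilde t = -\frac12\log(\omega - t)$, and observe that the type-I bound $K_t \le c/(\omega - t)$ becomes the uniform bound $\tilde\kappa^2 = 2(\omega - t)\kappa^2 \le 2c$. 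Since the estimates of \cref{thm:CurvatureEstimates} are scale invariant they pass to $\tilde\gamma$, and bootstrapping through the evolution equations of \cref{subsec:EvolEqns} and the commutation formula gives uniform bounds, on $[-\frac12\log\omega,\infty)$, on all spatial and modified temporal derivatives of the rescaled tangent vector.

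Next I would run the extraction from the proof of \cref{thm:BlowUpLimit}, now for the rescaled flow, for which no further dilation is needed since its curvature is already bounded: set $\tilde t_j = -\frac12\log(\omega - t_j) \to \infty$, apply rigid motions $A_j \in \SO(n)$ and translations $b_j \in \R^n$ so that $\tilde\gamma(\cdot,\cdot + \tilde t_j)$ passes through the origin with standard Frenet frame at $(p_j, 0)$, regard the resulting curves $\tilde\gamma_j$ as spatially periodic maps $\R \times [\alpha_j, \infty) \to \R^n$ with $\alpha_j \to -\infty$ so as to allow a noncompact limit, and use the operator $\delta/\delta\tilde t$ that commutes with $\partial_{\tilde s}$ as there. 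Arzelà--Ascoli then produces a subsequence along which the $T_j$ converge in $C^\infty_\loc$ to the tangent field of a smooth limit rescaled flow $\tilde\gamma_\infty$, defined for all $\tilde t$. This limit is nontrivial because the type-I hypothesis renders the blow-up sequence essential: combining $K_t \le c/(\omega - t)$ with the universal lower bound $K_t \ge \frac{1}{2(\omega - t)}$, which follows from \cref{cor:EvolKSquared} via the maximum principle, keeps $\tilde\kappa^2(p_j, 0)$ bounded below, whence $\kappa_\infty \not\equiv 0$.

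To identify $\tilde\gamma_\infty$ I would insert \labelcref{eq:RescaledMonotonicityFormula} into the limit: the weighted length $\int_{\tilde\gamma(\cdot,\tilde t)} \tilde k \dx{\tilde s}$ (centred at the singular point) is non-increasing and bounded below, hence convergent as $\tilde t \to \infty$, so its derivative $-\int |\partial_{\tilde s}\tilde\gamma + \tilde\gamma^\perp|^2 \tilde k \dx{\tilde s}$ tends to zero in an integrated sense; translating by $\tilde t_j$ and passing to the limit along the subsequence yields $\partial_{\tilde s}\tilde\gamma_\infty + \tilde\gamma_\infty^\perp \equiv 0$, i.e. $\gamma_\infty$ is a self-similarly shrinking solution of \labelcref{eq:CSF}. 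As $\gamma_\infty$ is a nontrivial blow-up limit of $\gamma$, \cref{thm:BlowUpIsPlanar} shows it is planar and convex; by the classification of planar self-shrinking curves \cite{abresch1986normalized,halldorsson2012selfsimilar} it is then the round circle or an Abresch--Langer curve, in particular a closed curve (so that $\tilde\gamma_\infty$ is periodic and $\gamma_\infty$ is one of its periods). For the winding number, the total absolute curvature is invariant under the rigid motions and the dilation defining $\tilde\gamma_j$, so $\int_{\tilde\gamma_j(\cdot,\tilde t)} \abs{\tilde\kappa_j} \dx{\tilde s} = \int_{\gamma_t} \abs{\kappa}\dx{s}$ for the corresponding time $t \to \omega$; by \cref{thm:TACestimate} this is non-increasing in $t$ and hence converges to $L := \lim_{t \to \omega}\int_{\gamma_t} \abs{\kappa}\dx{s}$, and the $C^\infty_\loc$ convergence together with the compactness of $\gamma_\infty$ forces $\int_{\gamma_\infty} \abs{\kappa_\infty}\dx{\tilde s} = L$. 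Since for a convex planar closed curve this integral is $2\pi$ times the turning number, the winding number of $\gamma_\infty$ equals $L/(2\pi)$ and is independent of the chosen blow-up sequence.

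I expect the main obstacle to be this last no-concentration-at-infinity step: showing that no total absolute curvature escapes to spatial infinity under the blow-up, so that the winding number is genuinely preserved — which is precisely why the compactness of the limit $\gamma_\infty$ must be pinned down first. The remaining ingredients are either already available in the excerpt (\cref{thm:BlowUpLimit,thm:BlowUpIsPlanar,thm:TACestimate,thm:CurvatureEstimates}, the rescaled monotonicity formula) or routine adaptations of Altschuler's computations to the present setting.
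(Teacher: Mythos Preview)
Your approach is essentially the paper's: both pass to the continuous Huisken rescaling, use the type-I bound to get uniform curvature control, run the extraction behind \cref{thm:BlowUpLimit}, invoke the rescaled monotonicity formula \labelcref{eq:RescaledMonotonicityFormula} to force the limit to be self-shrinking, apply \cref{thm:BlowUpIsPlanar} for planarity, and appeal to the total absolute curvature for the winding number. Your write-up is considerably more detailed than the paper's one-paragraph sketch and, in particular, you correctly flag the no-escape-of-curvature issue in the winding-number step and supply the closedness of $\gamma_\infty$ via the Abresch--Langer classification, which the paper leaves implicit.

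One small slip: the two-sided bound $\tfrac{1}{2(\omega-t)} \le K_t \le \tfrac{c}{\omega-t}$ controls the \emph{maximum} of $\tilde\kappa^2$ from both sides, but it does not by itself give a lower bound on $\tilde\kappa^2(p_j,0) = 2(\omega-t_j)\,\kappa^2(p_j,t_j)$ for an \emph{arbitrary} blow-up sequence, since $\kappa^2(p_j,t_j)$ need not be comparable to $K_{t_j}$. So your sentence ``keeps $\tilde\kappa^2(p_j,0)$ bounded below'' does not follow as written, and hence neither does essentiality. The paper makes the same unproved assertion (``the type-I assumption also implies that any blow-up sequence is necessarily essential''), so this is not a divergence from the paper but a shared looseness. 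Nontriviality of $\gamma_\infty$ can be recovered without this: either use Fenchel's theorem on the preserved total absolute curvature (which you already track), or use that the Gaussian density at a genuine singular point exceeds $1$, so the monotonicity limit cannot be a line.
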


\subsection{Type-II singularities}

We now assume that the singular point $(0,\omega ) \in \R^n \times \R$ reached by the flow is of type-II.
Since the argument is exactly the same as for space curves, we do not repeat the details and instead refer to Altschuler's work \cite{altschuler1991singularities}.

We already know that by \cref{thm:BlowUpLimit}, a limit of rescalings $\gamma_\infty$ must exist on the interval $[-\infty, 0]$. Moreover, it is planar and convex.
It is then possible to show that, since the singularity is of type-II, there exists an essential blow-up sequence such that a limit of rescalings along it is in fact eternal, that is, it exists on the time interval $[-\infty, \infty]$. In addition, the limit solution is embedded and its total curvature is equal to $\pi$. Furthermore, by showing that the curvature and all its derivatives tend to zero at the ends, one then proves that this limit must be the Grim Reaper. Finally, one has

\begin{theorem}[cf. {\cite[Thm.~8.16]{altschuler1991singularities}}]
  \label{thm:ConvergenceTypeII}
  Suppose that a type-II singularity is forming at time $\omega $.
  Then there exists an essential blow-up sequence $\{(p_j,t_j)\}$ such that a sequence of rescalings along it converges to the Grim Reaper.
\end{theorem}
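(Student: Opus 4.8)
The plan is to carry out Hamilton's point-selection procedure, following Altschuler \cite{altschuler1991singularities}. First I would fix a sequence $T_j \nearrow \omega$ and, for each $j$, choose $(p_j,t_j) \in S^1 \times [0,T_j]$ maximising the scaling-invariant quantity $\kappa^2(p,t)(T_j - t)$ over $S^1 \times [0,T_j]$, writing $\theta_j := \kappa^2(p_j,t_j)(T_j - t_j)$. The type-II hypothesis $\limsup_{t \to \omega} K_t(\omega - t) = \infty$ forces $\theta_j \to \infty$, and since $\kappa$ is bounded near $t=0$ one gets $t_j \to \omega$. Setting $\lambda_j := \kappa(p_j,t_j)$ and running the blow-up procedure of \cref{sec:BlowUpLimits}, the maximality gives $K_t \le \kappa^2(p_j,t_j)$ for every $t \le t_j$, so that $\{(p_j,t_j)\}$ is an essential blow-up sequence (with $\rho = 1$); the same estimate yields $\sup \kappa_j^2(\cdot,\bar t) \le \theta_j/(\theta_j - \bar t)$ for $\bar t < \theta_j$. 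Since $\theta_j \to \infty$, the rescaled curvatures are then uniformly bounded on compact time intervals, and the rescaled existence intervals $[\alpha_j,\omega_j)$ satisfy $\alpha_j \to -\infty$ and $\omega_j > \theta_j \to \infty$.

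Next I would apply \cref{thm:BlowUpLimit} along a subsequence: the rescaled flows converge in $C^\infty_\loc$ to a nontrivial limit $\gamma_\infty$ which now exists for all $\bar t \in [-\infty,\infty]$ — an \emph{eternal} solution — has globally bounded curvature, and satisfies $\kappa_\infty^2(0,0) = 1 = \sup \kappa_\infty^2$, so that $\kappa_\infty$ attains its space-time maximum at the interior point $(0,0)$. By \cref{thm:BlowUpIsPlanar}, each $\gamma_\infty(\cdot,\bar t)$ is moreover a convex planar curve, and a short strong-maximum-principle argument for the curvature equation $\kappa_t = \kappa_{ss} + \kappa^3$ shows $\kappa_\infty > 0$ everywhere, for otherwise $\gamma_\infty$ would be a line, contradicting $\kappa_\infty(0,0)=1$.

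It then remains to identify $\gamma_\infty$ as the Grim Reaper. Since by \cref{thm:TACestimate} the total absolute curvature $\int_{\gamma_t} \abs{\kappa} \dx{s}$ is non-increasing, it converges to some $L \ge 0$ as $t \to \omega$; as this quantity is scaling invariant, localising along the blow-up gives that $\gamma_\infty$ has finite total curvature at most $L$, and combining this with convexity and \cref{thm:TACPlanar} one deduces that $\gamma_\infty$ is embedded with total curvature exactly $\pi$. Finally, $\gamma_\infty$ being a convex, eternal, planar curve shortening flow with $\kappa_\infty > 0$ bounded and attaining an interior space-time maximum, Hamilton's differential Harnack inequality for convex curves forces the Harnack quantity to vanish identically, which characterises $\gamma_\infty$ as a translating soliton; the unique planar translating soliton, up to rigid motion and scaling, is the Grim Reaper, and the normalisation $\kappa_\infty(0,0) = 1$ together with the frame normalisation built into the blow-up procedure fixes it to be exactly $\Gamma$.

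The hard part will be the analysis at the two noncompact ends of $\gamma_\infty$: one must show that $\kappa_\infty$ and all of its derivatives decay to zero there, which is what rules out $\gamma_\infty$ being merely a proper sub-arc or a multiply covered configuration and lets the Harnack argument conclude that $\gamma_\infty$ is exactly the Grim Reaper rather than some other eternal limit. This is the step where Altschuler's detailed estimates, together with the control on the total curvature afforded by \cref{thm:TACestimate,thm:TACPlanar}, do the essential work — and the reason why, as in the paper, I would ultimately refer to \cite{altschuler1991singularities} for the technical details rather than reproduce every estimate.
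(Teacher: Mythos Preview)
Your proposal is correct and largely parallels the paper's sketch, which simply refers to Altschuler \cite{altschuler1991singularities}: Hamilton's point-selection yields an essential blow-up sequence whose rescalings converge to an eternal, convex, planar limit with $\kappa_\infty$ attaining its space-time supremum at $(0,0)$. The one genuine difference is in how you classify the limit. The paper, following Altschuler's 1991 argument, proceeds by showing that the limit is embedded with total curvature $\pi$ and that the curvature and all of its derivatives decay at the two ends, from which one reads off the Grim Reaper directly. You instead invoke Hamilton's differential Harnack inequality, using that an eternal strictly convex planar solution with bounded curvature attaining its space-time maximum must be a translating soliton. The Harnack route is the more modern and conceptually cleaner one; Altschuler's paper predates it.

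This, however, makes your final paragraph somewhat misplaced. Once you have an eternal, strictly convex, planar solution with $\sup \kappa_\infty = \kappa_\infty(0,0) = 1$, the Harnack argument already identifies the Grim Reaper without any analysis of the ends: the decay of $\kappa_\infty$ at the ends and the statement that the total curvature equals $\pi$ are \emph{consequences} of the classification, not prerequisites for it. If you commit to the Harnack approach, you can drop the end analysis entirely (so there is no ``hard part'' left); conversely, if you want to follow Altschuler faithfully as the paper does, then the end analysis \emph{replaces} the Harnack step rather than feeding into it. Mixing the two, as you do, is harmless but redundant.
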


\section{Convergence analysis}
\label{sec:ConvergenceAnalysis}

We now come to the proof of our main theorem, that is, we show that initial curves with entropy less than that of the Grim Reaper converge to a round point in finite time.

\begin{proof}[Proof of \cref{thm:CSFConvergence}]
  Since the entropy is non-increasing under curve shortening flow, we may assume that $\lambda(\gamma) < 2$. For if $\lambda(\gamma)$ was equal to $2$, the initial curve $\gamma_0$ would have to be a self-shrinker, but $\lambda(S^1) < 2$.

  Assume that a type-II singularity forms at time $\omega $. Then by \cref{thm:ConvergenceTypeII}, there exists an essential blow-up sequence such that a limit $\gamma_\infty$ of rescalings along it is the Grim Reaper. Since the entropy is lower semicontinuous with regard to the locally smooth convergence, we must have $\lambda(\gamma_\infty) < 2$, however, we know from \cref{prop:EntropyGR} that the entropy of the Grim Reaper equals $2$. Thus the singularity cannot be of type-II.

  Therefore, assume that a type-I singularity forms, so that by \cref{thm:ConvergenceTypeI}, we have a limit $\gamma_\infty$ of a sequence of rescalings $\{\gamma_j\}$ that is self-similarly shrinking in the plane. By the classification of Abresch--Langer \cite{abresch1986normalized}, $\gamma_\infty$ could be one or more lines through the origin, a singly or multiply-covered circle, or one of the Abresch--Langer curves $\gamma_{m,n}$, $m \geq 2$.
  \Cref{prop:EntropyAL} implies that $\lambda(\gamma_{m,n}) \geq m \sqrt{\frac{2 \pi}{\e}}$, so that the latter possibility cannot occur.
  Moreover, from \cref{prop:EntropyEuclideanDensity} we have that $\lambda(\gamma_\infty) \geq \Theta^2(\gamma_\infty, x)$ for any point $x$, which implies that $\gamma_\infty$ is embedded. Hence $\gamma_\infty$ cannot be a multiply-covered circle or a family of intersecting lines.

  By standard theory \cite{brakke1978motion,white2005local}, should a single line appear as a limit of rescalings of the flow, the fact that its Gaussian density is $1$ in a suitable space-time region implies that the curvature is bounded there after all, so that the blow-up point is not a singularity, which is a contradiction. Therefore, the blow-up limit $\gamma_\infty$ must be the standard circle around the origin, that is, the tangent flow at the singularity is a smooth, closed, embedded self-shrinker. Then Schulze's uniqueness result for compact tangent flows \cite{schulze2014uniqueness} implies that this is the only possible tangent flow, whereby the rescaled flow converges to the round circle.
\end{proof}

\printbibliography

\end{document}